\documentclass[11pt]{article}
\usepackage{amsmath,amssymb}
\usepackage{graphicx, enumerate}
\usepackage{chngcntr}
\usepackage{epstopdf}
\usepackage{epsfig}
\usepackage{setspace}
\usepackage{indentfirst}
\usepackage{amsthm}
\usepackage{makecell}
\usepackage{multirow}
\usepackage{geometry}
\counterwithin{table}{section}

\newtheorem{thm}{Theorem}[section]

\newtheorem{lem}[thm]{Lemma}

\newcounter{SectionStep}[section]

\newcounter{SideStep}[SectionStep]

\usepackage{hyperref}
\usepackage{authblk}
\begin{document}
\allowdisplaybreaks
\title{The maximum number of two $4$-vertex graphs in planar graphs}
\author{Wei Liu$^{1}$, Lin Sun$^{2}$\thanks{This work was supported by Guangdong Basic and Applied Basic Research Foundation (Natural Science Foundation of Guangdong Province, China, Grant No. 2025A1515011904).}, Jianliang Wu$^{1}$\thanks{This work is partially supported by National Natural Science Foundation of China (11971270, 11631014).}\thanks{Corresponding author. \emph{E-mail address:} jlwu@sdu.edu.cn.}\\
{\small $^1$\ School of Mathematics, Shandong University, Jinan 250100, China}\\
{\small $^2$\ School of Mathematics and Statistics, Lingnan Normal University, Zhanjiang 524000, China.}\\
}
\date{}

\maketitle

\begin{abstract}
Let $f(n,H)$ be the maximum number of copies of a graph $H$ in a planar graph of order $n$. When $H$ is a connected graph on four vertices, $f(n,H)$ has been completely determined except for two cases: $K_{1,3}^+$ (the claw graph $K_{1,3}$ with one additional edge) and $K_4^-$ (the complete graph $K_4$ with one edge removed). Here, we address these two cases and establish that for all $n\ge4$,
$$f(n,K_{1,3}^+) = 4n^2-12n-4 \textrm{ and } f(n,K_4^-) =\frac{1}{2}(n^2+9n-40).$$ 
We also characterize all planar graphs attaining these bounds. 
\end{abstract}

{\bf Keywords}: planar graph; generalized Turán number; extremal function. 

\textbf{2020 Mathematics Subject Classification}: 05C35.

\section{Introduction}\label{sec:Intro}
\noindent
All graphs considered in this paper are simple and finite. Any undefined terminology follows that of Diestel \cite{GraphTheory}. Let $G=(V,E)$ be a graph. We use $|G|$, $e(G)$, $\delta(G)$ and $\overline{G}$ to denote the number of vertices, number of edges, minimum degree and the complement of $G$, respectively. If $|G|=k$, we say $G$ is a $k$-$vertex$ $graph$.
For a subset $S\subseteq V(G)$, we use $G[S]$ to denote the subgraph of $G$ induced by $S$ and $G - S = G[V(G)\setminus S]$. The \textit{neighborhood} of $S$ is defined as $N(S) =\{u\in V(G) \setminus S : \;$ there exists $v\in S$ such that $uv\in E(G)\}$. For a vertex $x\in V(G)$, we write $G-x := G-\{x\}$, $N_G(x) := N_G(\{x\})$, $N_G[x]:= N_G(x)\cup \{x\}$ and the \textit{degree} $d_G(x):=|N_G(x)|$. If $d_G(x) = k$, $x$ is called a $k$-$vertex$.
A \textit{pendant vertex} of $G$ is a $1$-vertex and a \textit{pendant edge} is an edge incident with a pendant vertex. The subscripts are omitted when unambiguous. 

For two graphs $G$ and $H$, let $\mathcal{N}(H, G)$ denote the number of subgraphs of $G$ isomorphic to $H$ (called \textit{copies} of $H$ or $H$-\textit{copies}), and let $\mathcal{N}_x(H, G)$ denote the number of $H$-copies containing $x$ in $G$ for any vertex $x\in V(G)$. Given a family of graphs $\mathcal{F}$, we say $G$ is $\mathcal{F}$-free if it contains no subgraph isomorphic to any graph in $\mathcal{F}$. 
The \textit{Turán number} of $\mathcal{F}$, $ex(n,\mathcal{F})$, denotes the maximum number of edges of an $n$-vertex $\mathcal{F}$-free graph. This function has been intensively studied, starting with the Turán's Theorem \cite{Turan1941} that determines $ex(n,K_r)$ for $r>1$. Turán also showed \cite{Turan1941} that the unique extremal graph is the complete $n$-vertex $r$-partite graph with as equal parts as possible. This graph is called the \textit{Turán graph} and is denoted by $T_r(n)$. 

Alon and Shikhelman \cite{alon2016many} introduced the following \textit{generalized Turán number}, 
$$
ex(n, H, \mathcal{F}) = \max \{\mathcal{N}(H, G) : G \text{ is an }n \text{-vertex } \mathcal{F}\text{-free graph}\}.
$$
Simply write $ex(n,H, F)= ex(n, H, \{F\})$. They proved that for any graph $F$ with chromatic number $\chi(F)=r+1>m$, $ex(n,K_m,F)=\mathcal{N}(K_m, T_r(n))+o(n^m)$. 
When $H=P_2$ in $ex(n, H, \mathcal{F})$, this reduces to the classical Turán problem. 
For $H\neq P_2$, the earliest result is due to Zykov \cite{zykov1949certain}, who determined $ex(n, K_s, K_t)$ exactly for all $s$ and $t$. Erdős conjectured that $ex(n, C_5, C_3) \sim (\frac{n}{5})^5$, which was later confirmed independently by Hatami et al. \cite{hatami2012number} and by Grzesik \cite{grzesik2012maximum}. Győri and Li \cite{Gyori2012} proved that for $k\ge2$, $ex(n,C_3,C_{2k+1})\le \frac{(2k-2)(16k-8)}{2}ex(n,C_{2k})$. 
Recently, Grzesik and Kielak \cite{grzesik2022odd} determined the asymptotic value of $ex(n, C_k, C_{k-2})$ for every odd $k$. The function $ex(n, C_3, C_5)$ was studied by Bollobás and Győri \cite{bollobas1988maximal}. Their results were subsequently improved in \cite{alon2016many,Ergemlidze2019,Ergemlidze2022}, although the correct asymptotic of the function remains open. The problem of maximizing the number of $P_\ell$ copies in a $P_k$-free graph was investigated in \cite{Pell}. Recently, the maximum number of copies of $K_{s,s}$ in a $C_{2s+2}$-free $n$-vertex graph for all integers $s\ge2$ and sufficiently large $n$ is determined in \cite{Gyori2025}. 
For research regarding the case where \(\mathcal{F}\) is the set of disconnected graphs, refer to \cite{Chen2024,Gerbner2019,Liu2025,Wang2020,Zhang2022,Zhu2022,Zhu2021}.

Let $G$ be a graph and $xy\in E(G)$.  To \emph{contract} $xy$ is to delete it first and then replace $x$ and $y$ by a single vertex incident with all the edges which were incident in $G$ with either $x$ or $y$, finally delete one edge of any pair of parallel edges so formed. A graph $H$ is a \emph{minor} of a graph $G$ (or $G$ contains an $H$-minor) if $G$ has a subgraph contractible to $H$. $G$ is called $H$-\emph{minor free} if $G$ does not have $H$ as a minor. Let $\mathcal{F}$ be the set of graphs each of which contains an $K_5$-minor or $K_{3,3}$-minor, and
$$
f(n, H)= ex(n, H, \mathcal{F}) = \max \{\mathcal{N}(H, G) : G \text{ is an } n \text{-vertex planar graph}\}.
$$
Let
$$\mathbb{F}(n, H) = \{G: G\text{ is an } n \text{-vertex planar graph}\text{ and }\mathcal{N}(H, G) = f(n, H)\}$$
and 
$$\mathbb{F}(H)= \bigcup_{n\ge |H|}\mathbb{F}(n, H).$$

\begin{figure}
 \centering
 \includegraphics[height=5cm]{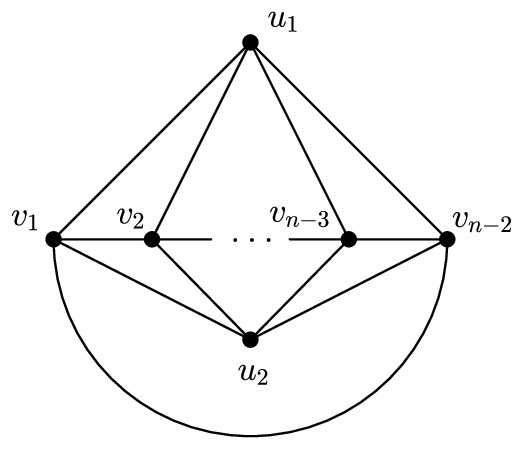}
 \caption{The double-wheel graph $WW_n$.}\label{Fig:WWn}
\end{figure} 

In the following, we always assume that all planar graphs are embedded in the plane. For a face $f$ of a planar graph, let $\partial f$ denote the set of vertices on the boundary of $f$. 
In the process of exploring the extremal graphs for $f(n,H)$, researchers have discovered several special planar graphs. 
A $double$-$wheel$ graph $WW_n$ on $n(\ge 5)$ vertices is the graph obtained from an $(n-2)$-cycle by adding two non-adjacent vertices adjacent to all vertices of the cycle (see Figure \ref{Fig:WWn}).
An \textit{Apollonian graph} is constructed from $K_3$ by recursively placing a vertex inside a face and joining it to the three vertices of that face. Let $H_n$ be the graph obtained from a path $P_{n-2}$ by adding two adjacent vertices, each adjacent to all vertices of the path (see Figure \ref{Fig:Fn}$(a)$). Let $J_1$ and $J_2$ be the graphs shown in Figure \ref{Fig:Fn}$(b)$ and $(c)$, respectively. Denote $\mathcal{A}_n=\{G:\;G$ is an $n$-vertex Apollonian graph$\}$, $\mathcal{A} = \bigcup_{n\ge3}\mathcal{A}_n$, $\mathcal{H} = \{H_n:\;n\ge 3\}$ and $\mathcal{H}^+ = \{J_1,J_2\}\cup\mathcal{H}\setminus\{H_3\}$. 
\begin{figure}
 \centering
 \includegraphics[scale=0.8]{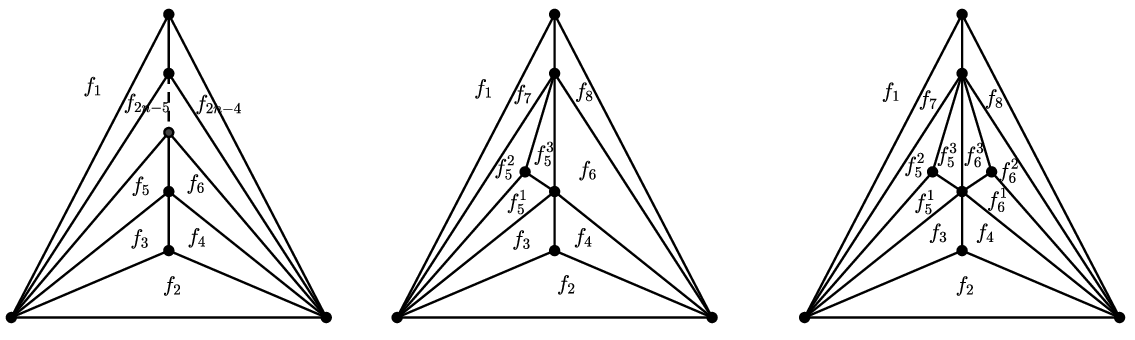}\\
 $(a)\;\; H_n$\qquad\qquad\qquad\qquad\qquad\qquad$(b) \;\; J_1$\quad\qquad\qquad\qquad\qquad\qquad$(c) \;\; J_2$
 \caption{Special planar graphs.}\label{Fig:Fn}
\end{figure}

It is well-known that $f(n, K_2) = 3n -6(n\ge 3)$ and $\mathbb{F}(K_2)$ is the set of all maximal planar graphs, that is, all planar triangulations. Alon and Caro \cite{Alon1984Caro} demonstrated that $f(n, P_3) = 2\binom{n-1}{2}+ 6n-18$ for any $n\ge4$. 
Hakimi and Schmeichel \cite{hakimi1979} showed that $f(n, C_3) = 3n - 8$ for any $n\ge3$ and $\mathbb{F}(C_3)=\mathcal{A}$. 

If $H$ is a 4-vertex connected graph, then $H\in\{K_{1,3}, P_4, C_4, K_{1,3}^+, K_4^-, K_4\}$ where $K_{1,3}^+$ is the claw graph $K_{1,3}$ with an additional edge and $K_4^-$ is the complete graph $K_4$ with one edge removed. Alon and Caro \cite{Alon1984Caro} proved that for any $n\ge4$, $$f(n,K_{1,3})=2\binom{n-1}{3}+4n-14,\text{ and }\mathbb{F}(K_{1,3})=\mathcal{H}\setminus\{H_3\}. $$ 
Alameddine \cite{Alameddine1980}, Hakimi and Schmeichel \cite{hakimi1979} and Alon and Caro \cite{Alon1984Caro}, respectively proved that for any $n\ge4$, 
$$ f(n,C_4) = f(n,K_{2,2}) =\binom{n-2}{2}+4n-14\text{ and } \mathbb{F}(C_4) = \mathcal{H}^+,$$ 
Wood \cite{Wood2007} proved that for any $n\ge4$, $f(n,K_4)= n-3$. Grzesik et al. \cite{grzesik2022P4} proved that for any $n\ge4$, 
$$f(n,P_4)=\begin{cases}
     12, & \text{if }n=4,\\
     147, & \text{if }n=7,\\
     222, & \text{if }n=8,\\
     7n^2-32n+27, & \text{if }n=5,6\text{ and }n\ge9,
   \end{cases}$$
and $\mathbb{F}(P_4)=\mathcal{H}^+\setminus\{H_7,H_8\}$. 


Moreover, Antonir and Shapira \cite{Antonir2024} provided the upper bounds for $f(n,C_{2m+1})$ for all integers $m\ge2$. Heath et al.  \cite{Heath2025} established the asymptotic bounds for $f(n,C_{2m+1})$ when $m\in\{2,3,4\}$, and provided the upper bounds for all other values of $m\ge5$. 

We newly determine the extremal number of $K_{1,3}^+$, while the result for $K_4^-$ was first obtained by the authors in an SSRN preprint \cite{ShiWang2025}. We shorten its original twenty-page proof to under one page. Our main results are as follows.

\begin{thm}\label{thm:K13+}
$f(n,K_{1,3}^+)= 4n^2-12n-4$ for any $n\ge4$, and $\mathbb{F}(K_{1,3}^+)=\mathcal{H}^+$.
\end{thm}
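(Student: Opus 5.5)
Throughout, write $t(v)$ for the number of triangles of $G$ containing $v$. A copy of $K_{1,3}^+$ consists of a triangle $T$, a vertex $v\in T$, and a neighbour of $v$ outside $T$. Hence
$$\mathcal{N}(K_{1,3}^+,G)=\sum_{T}\sum_{v\in T}(d(v)-2)=\sum_{v\in V(G)} t(v)\,(d(v)-2).$$
I will use this identity both for the lower-bound constructions and for the upper bound.

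\emph{Lower bound.} In $H_n$ the two adjacent apexes have degree $n-1$ and lie in $2n-5$ triangles each. The $n-4$ interior path vertices have degree $4$ and lie in $5$ triangles each. The two path ends have degree $3$ and lie in $3$ triangles each. The identity gives
$$2(2n-5)(n-3)+10(n-4)+6=4n^2-12n-4.$$
I will do the same check for $J_1$ and $J_2$ directly.

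\emph{Upper bound, reductions.} Adding an edge never destroys a copy, so an extremal graph may be assumed to be a triangulation. Then $d(v)\ge 3$, $\delta(G)\le 5$, every edge lies in at least two triangles, and $G[N(v)]$ contains the Hamiltonian link cycle of $v$. Since $G[N(v)]+v$ is planar with $v$ adjacent to all of $N(v)$, the graph $G[N(v)]$ is outerplanar, so $t(v)=e(G[N(v)])\le 2d(v)-3$.

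\emph{Upper bound, induction.} I argue by induction on $n$, checking small $n$ (say $n\le 6$) by hand. Let $v$ be a vertex of minimum degree $d\in\{3,4,5\}$. Since $f(n)-f(n-1)=8n-16$, it suffices to show $\mathcal{N}_v(K_{1,3}^+,G)\le 8n-16$, and then to show that equality forces $G-v\in\mathcal{H}^+$ with $v$ placed so that $G\in\mathcal{H}^+$. Copies through $v$ fall into three types:
\begin{enumerate}[(i)]
\item $v$ is the degree-$3$ vertex of the copy; these number $t(v)(d-2)$, which is bounded by a constant.
\item $v$ is a degree-$2$ vertex of the triangle; these number $\sum_{vxy}\big((d(x)-2)+(d(y)-2)\big)$ over triangles $vxy$.
\item $v$ is the pendant vertex attached to some $x\in N(v)$; these number $\sum_{x\in N(v)}\big(t(x)-t_v(x)\big)$, where $t_v(x)$ counts triangles through both $x$ and $v$.
\end{enumerate}
For the last two types I use $t(x)\le 2d(x)-3$, so everything becomes a linear combination of the degrees $d(x)$, $x\in N(v)$, plus constants. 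With $d\le 5$ the coefficients are small. The key planar fact I need is that the neighbours of $v$ cannot all have large degree: two vertices have at most two common neighbours unless the configuration is essentially $H_n$-like, and $\sum_{x\in N(v)} d(x)\le 2n+O(1)$. I plan to derive this by counting the vertices of $G-N[v]$ adjacent to each $x$, using that $K_{3,3}$ is excluded, so at most two vertices of $N(v)$ share three or more common neighbours outside.

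\emph{Main obstacle.} I expect the hardest step to be bounding $\sum_{x\in N(v)}\big(2d(x)+t(x)\big)$ sharply enough to get exactly $8n-16$. The dominant case is that two neighbours of $v$ have degree close to $n-1$, as in $H_n$. I will split on the number of neighbours of $v$ of degree at least $n/2$. If there are at most two, the others contribute $O(1)$ each and the bound follows from $d(x)\le n-1$. If there are three or more, a $K_{3,3}$-subdivision arises for large $n$, which is a contradiction. Tracking equality through this case split should force $v$ to be a $3$- or $4$-vertex adjacent to both apexes, giving $G\in\mathcal{H}^+$. The small sporadic members $J_1$ and $J_2$ need to be handled at the base of the induction.
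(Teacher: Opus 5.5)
\paragraph{The gap.} Your plan fails at the induction step through a minimum-degree vertex of degree $4$: the inequality $\mathcal{N}_v(K_{1,3}^+,G)\le 8n-16$ you want there is false. For $d(v)=4$ your three types give
$\mathcal{N}_v=2t(v)+\sum_{x\in N(v)}\big(t_v(x)(d(x)-3)+t(x)\big)$.
Suppose the link $x_1x_2x_3x_4$ of $v$ has a chord $x_1x_3$ whose ends both have degree close to $n$. Then each end has $t_v=3$ and $t\approx 2d$, so each contributes about $5n$.

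Concretely, take $H_{m+2}$ with apexes $z_1z_2$ and path $w_1\cdots w_m$. Into the face $z_1w_1w_2$ insert a triangle $PQR$ attached octahedrally ($z_1\sim Q,R$, $w_1\sim P,R$, $w_2\sim P,Q$), and do the same in the face $z_1w_{m-1}w_m$. This gives a triangulation with $n=m+8$ and $\delta=4$. For $v=w_i$ with $4\le i\le m-3$ we have $d(z_1)=n-3$, $t(z_1)=2n-11$, $d(z_2)=n-7$, $t(z_2)=2n-17$. Hence $\mathcal{N}_v=10n-52>8n-16$ once $n\ge 19$.

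If you glue icosahedra into the two end faces instead, then $w_3,\dots,w_{m-2}$ are the only vertices of minimum degree, and each has $\mathcal{N}_v\ge 10n-164$. So for large $n$ no choice of minimum-degree vertex rescues the step.

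Your case split is also wrong in both branches:
\begin{itemize}
\item With two big neighbours, the bound does not ``follow from $d(x)\le n-1$''; the example above shows it fails.
\item Three neighbours of degree at least $n/2$ do not force a $K_{3,3}$-subdivision. In $G_{r,r,r}$ the central vertex has three neighbours of degree $(2n+2)/3$, and it even attains $\mathcal{N}_v=8n-16$.
\end{itemize}

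\paragraph{The fix.} The missing idea is that $\delta(G)\ge 4$ must be excluded globally, not by deleting a vertex; this is what the paper does in Lemma~\ref{lem:K13+,delta4}. With your identity it is quick:
$\mathcal{N}(K_{1,3}^+,G)\le\sum_v(2d(v)-3)(d(v)-2)=2\sum_v d^2(v)-36n+84$.
Since $\delta\ge4$ forces $\Delta\le n-2$, the convexity bound of Lemma~\ref{lem:delta4_d2x} gives $\sum_v d^2(v)\le 2n^2+8n-24$. Hence $\mathcal{N}\le 4n^2-20n+36<4n^2-12n-4$ for all $n\ge 6$.

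After that, the induction only ever removes a $3$-vertex, and there your plan does work:
$\mathcal{N}_v=2\sum_{x\in N(v)}d(x)+\sum_{x\in N(v)}t(x)-15\le 4\sum_{x\in N(v)}d(x)-24\le 8n-16$,
using $t(x)\le 2d(x)-3$ and $d_1+d_2+d_3\le 2n+2$. This is a shorter route to Lemma~\ref{lem:delta3}, and equality forces exactly the conditions of Lemma~\ref{lem:iff}.

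\paragraph{The equality case.} Equality never occurs at a $4$-vertex. Also, $J_1$ and $J_2$ (on $7$ and $8$ vertices) are not covered by a base case $n\le 6$. So you still need an argument like Lemma~\ref{lem:center} to conclude $G\in\mathcal{H}^+$. Alternatively, check directly which faces of $H_{n-1}$, $J_1$, $J_2$ have $(G-v)$-degree sum $2n-1$.
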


\begin{thm}\label{thm:K4-}
$f(n,K_4^-)=\frac{1}{2}(n^2+9n-40)$ for any $n\ge4$, and $\mathbb{F}(K_4^-)=\mathcal{H}^+$. 
\end{thm}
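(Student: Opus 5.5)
The plan is to reduce the count of $K_4^-$ to the already known extremal counts for $C_4$ and $K_4$. In a copy of $K_4^-$ on $\{x,y,u,v\}$ with $uv$ the missing edge, the edge $xy$ is the unique edge joining the two vertices of degree $3$. The boundary $xuyv$ is the unique $4$-cycle in this copy, since $u$ and $v$ are non-adjacent. Hence copies of $K_4^-$ in $G$ correspond bijectively to pairs $(C,e)$ with $C$ a $4$-cycle of $G$ and $e$ a chord of $C$ present in $G$. A $4$-cycle has at most two chords, and it has two exactly when its vertex set induces a $K_4$. Each $K_4$ contains exactly three $4$-cycles. Therefore
\begin{equation*}
\mathcal{N}(K_4^-,G)=\sum_{C}\#\{\text{chords of } C\}\le \mathcal{N}(C_4,G)+3\,\mathcal{N}(K_4,G).
\end{equation*}
Equality holds iff every $4$-cycle of $G$ has at least one chord.

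Next I would invoke the results quoted in the introduction: $f(n,C_4)=\binom{n-2}{2}+4n-14$ (Alameddine, Hakimi--Schmeichel, Alon--Caro) and $f(n,K_4)=n-3$ (Wood). These give
\begin{equation*}
\mathcal{N}(K_4^-,G)\le \binom{n-2}{2}+4n-14+3(n-3)=\binom{n-2}{2}+7n-23=\tfrac12(n^2+9n-40).
\end{equation*}
For the matching lower bound I would count directly in $H_n$, using the formula $\mathcal{N}(K_4^-,G)=\sum_{xy\in E(G)}\binom{c(xy)}{2}$, where $c(xy)=|N(x)\cap N(y)|$. Let $a,b$ be the two adjacent apexes.
\begin{itemize}
\item The edge $ab$ contributes $\binom{n-2}{2}$.
\item The edges from an apex to the two path endpoints contribute $1$ each, and those to the $n-4$ interior path vertices contribute $3$ each. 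Over both apexes this gives $4+6(n-4)$.
\item Each of the $n-3$ path edges contributes $1$.
\end{itemize}
The total is $\binom{n-2}{2}+7n-23$, so the bound is attained.

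For the characterization, equality forces $\mathcal{N}(C_4,G)=f(n,C_4)$, so $G\in\mathbb{F}(C_4)=\mathcal{H}^+$. It also forces $\mathcal{N}(K_4,G)=n-3$ and every $4$-cycle to have a chord. Conversely, I would check that every member of $\mathcal{H}^+$ achieves equality. For $H_n$ this is the computation above. For the two sporadic graphs $J_1$ and $J_2$, a direct count (or checking that they have $n-3$ copies of $K_4$ and no chordless $4$-cycle) settles it.

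The main obstacle is only this finite verification for $J_1,J_2$ together with the small cases $n=4,5$. For $n=4$ the formula gives $12$, which equals $\mathcal{N}(K_4^-,K_4)=6\cdot 2$ and is consistent with $H_4=K_4$. If either $J_1$ or $J_2$ failed the $K_4$ condition, it would have to be excluded from $\mathbb{F}(K_4^-)$, contradicting the statement as written. So I expect the check to succeed, and I would carry it out explicitly from the figures.
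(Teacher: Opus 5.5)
Your argument is correct, and it takes a genuinely different route from the paper.

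\textbf{The paper's route.} The paper argues from scratch by induction:
\begin{itemize}
\item Lemma~\ref{lem:maximal} reduces to triangulations.
\item The case $\delta(G)\ge 4$ is excluded by summing the outerplanar $P_3$ bound (Lemma~\ref{lem:exop_P3}) over all neighbourhoods and applying the degree-square bound of Lemma~\ref{lem:delta4_d2x}.
\item For $\delta(G)=3$ it deletes a $3$-vertex $v$ and shows $\mathcal{N}_v(K_4^-,G)\le n+4$, with equality iff $G\in\mathcal{G}$ and $v\in C(G)$.
\item It then identifies the extremal graphs through Lemmas~\ref{lem:iff} and~\ref{lem:center}, the same machinery used for $K_{1,3}^+$.
\end{itemize}

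\textbf{Your route.} You use the exact identity $\mathcal{N}(K_4^-,G)=\sum_C(\text{number of chords of } C)$, summed over the $4$-cycles $C$ of $G$. This gives $\mathcal{N}(K_4^-,G)\le \mathcal{N}(C_4,G)+3\,\mathcal{N}(K_4,G)$ for every graph, and you then use that the $C_4$- and $K_4$-extremal families are compatible.

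\textbf{Comparison.} Your route is far shorter. It also explains structurally why $f(n,K_4^-)=f(n,C_4)+3f(n,K_4)$ and why $\mathbb{F}(K_4^-)=\mathbb{F}(C_4)$. The price is that all the real work, in particular the uniqueness statement, is imported from the cited characterization $\mathbb{F}(C_4)=\mathcal{H}^+$. The paper's proof needs only the outerplanar $P_3$ bound, so it would survive an error in that older characterization. This is not an idle concern: the paper itself notes that Alon and Caro's claimed description of $\mathbb{F}(P_3)$ was wrong.

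\textbf{Two minor points.}

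First, the finite check for $J_1,J_2$ can be done without the figures. Every member of $\mathcal{H}^+$ is an Apollonian graph, i.e.\ a planar $3$-tree. Hence it is chordal, so it has no chordless $4$-cycle, and it has exactly $n-3$ copies of $K_4$. Together with $J_1,J_2\in\mathbb{F}(C_4)$, this gives equality in both of your inequalities. A direct count agrees:
\begin{itemize}
\item In $J_2$ (a $K_4$ with a $3$-vertex stacked in each face), the six edges of the central $K_4$ have four common neighbours and the other twelve edges have two. This gives $6\binom{4}{2}+12=48=\frac12(8^2+9\cdot 8-40)$.
\item $J_1$ gives $36=\frac12(7^2+9\cdot 7-40)$.
\end{itemize}

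Second, your remark on $n=4$ is off by a factor of two. In fact $\frac12(4^2+9\cdot 4-40)=6$ and $\mathcal{N}(K_4^-,K_4)=6$, one copy for each deleted edge. This is what your general count $\binom{2}{2}+7\cdot 4-23$ already gives.
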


In Section \ref{sec:proof_K13+}, we give the proof of Theorem \ref{thm:K13+}. In Section 3, we prove Theorem \ref{thm:K4-}. In the last section, we establish full characterizations of the graphs in $\mathbb{F}(P_3)$ and $\mathbb{F}(K_4)$. 
Alon and Caro pointed (see Remark 1 in \cite{Alon1984Caro}) that $\mathbb{F}(P_3)=\mathcal{H}$. However, this assertion is not correct. In fact, $\mathbb{F}(P_3)=\mathcal{H}\cup\{J_1,J_2\}=\mathcal{H}^+\cup\{H_3\}$. 
Wood \cite{Wood2007} proved that the graphs in $\mathcal{A}\setminus\{K_3\}$ contain exactly $f(n,K_4)= n-3$ copies of $K_4$, that is, $\mathcal{A}\setminus\{K_3\}\subseteq \mathbb{F}(K_4)$.  Here, we show that $\mathbb{F}(K_4)=\mathcal{A}\setminus\{K_3\}$. 
Together with earlier results, this completes the characterization of both $f(n,H)$ and $\mathbb{F}(H)$ for all connected graphs $H$ with $|H|\in\{3,4\}$. 

To prove our results, we need the following useful lemmas.

\begin{lem}[{\rm \cite{Campos2013}}]\label{lem:op_delta}
Let $G$ be an outerplanar graph. Then $\delta(G)\le2$. 
\end{lem}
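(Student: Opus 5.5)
We argue by contradiction using a minimal counterexample, relying on two standard facts: every simple planar graph has a vertex of degree at most $5$ (Euler's formula gives $e(G)\le 3n-6$), and outerplanar graphs are exactly the graphs excluding $K_4$ and $K_{2,3}$ as minors. Suppose $G$ is an outerplanar graph with $\delta(G)\ge 3$, chosen with $|G|$ as small as possible. Since $\delta(G)\ge 3$ we have $|G|\ge 4$. Any cut vertex or disconnection lets us pass to a smaller block or component that still violates the lemma (see below), so first we reduce to the case that $G$ is $2$-connected. In that case we use the outerplanar embedding: all vertices lie on the outer face, and the outer face boundary is a Hamiltonian cycle $C=v_1v_2\cdots v_nv_1$. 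Every other edge is a chord of $C$, and no two chords cross.

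The main step is to find a vertex of degree $2$. If $C$ has no chords, every vertex has degree $2$ and we are done. Otherwise, pick a chord $v_iv_j$ with $i<j$ that minimizes $j-i$ among all chords. The vertices $v_{i+1},\dots,v_{j-1}$ strictly between its endpoints form a nonempty set, because $v_iv_j$ is not an edge of $C$. Take any such vertex $v_k$ and suppose it is incident to a chord $v_kv_\ell$.
\begin{enumerate}[(i)]
\item If $v_\ell$ lies outside the arc $v_i\cdots v_j$, then $v_kv_\ell$ crosses $v_iv_j$, which contradicts planarity of the outerplanar drawing.
\item If $v_\ell$ lies inside the arc, then $v_kv_\ell$ is a chord spanning a strictly shorter arc, which contradicts the minimality of $j-i$.
\end{enumerate}
Hence $v_k$ has degree $2$, contradicting $\delta(G)\ge 3$. (Equivalently, one could use the standard fact that a $2$-connected outerplanar graph with $n\ge 4$ has at least two nonadjacent vertices of degree $2$.)

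The reduction step is the part needing some care; the chord argument itself is routine. If $G$ is disconnected, each component is outerplanar with minimum degree at least $3$ and fewer vertices, contradicting minimality. If $G$ is connected but has a cut vertex, take a leaf block $B$ with cut vertex $c$. Every vertex of $B$ other than $c$ has all its neighbours inside $B$, so its degree in $B$ is at least $3$. Also $B$ is not a single edge, since its non-cut endpoint would then have degree $1$. So $B$ is $2$-connected and outerplanar, and the chord argument applies to $B$. Here one must be careful: the degree-$2$ vertex produced must differ from $c$. To guarantee this, run the argument on both sides: taking the minimal chord inside each of the two arcs cut off by some chord gives two distinct degree-$2$ vertices, at most one of which is $c$. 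That yields a vertex of $G$ of degree $2$, a contradiction.

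Alternatively, the whole lemma follows from a counting argument. Add a new vertex adjacent to every vertex of $G$; the resulting graph is planar, so $e(G)+n\le 3(n+1)-6$, which gives $e(G)\le 2n-3$. Then the degree sum is at most $4n-6<3n$ whenever $n<6$, so this version needs small cases handled separately and requires more work. The chord argument is therefore the route I would use.
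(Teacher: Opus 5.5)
The paper gives no proof of this lemma. It is quoted from \cite{Campos2013} and used only as a black box in Lemma~\ref{lem:delta4_d2x}. Your innermost-chord argument is a correct, self-contained proof of that standard fact. In a $2$-connected block the outer face is bounded by a Hamiltonian cycle, and the chords lie inside that cycle and cannot interleave. So a chord of minimal span leaves every vertex strictly under it with degree $2$. Doing this on both sides of one fixed chord gives two distinct degree-$2$ vertices, and one of them avoids the cut vertex of a leaf block, which is exactly what your reduction needs.

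A few things to clean up:
\begin{itemize}
\item The two ``standard facts'' you announce at the start are never used: that planar graphs have a vertex of degree at most $5$, and the $K_4$/$K_{2,3}$-minor characterization.
\item The minimal-counterexample framing is unnecessary, and the claim that a leaf block ``still violates the lemma'' is false as stated, since $c$ may have degree $2$ in $B$. Your later two-vertex argument is what actually works, so argue directly on a component and, if needed, on a leaf block.
\item In case (ii), ``inside the arc'' must include the endpoints $v_i,v_j$. A chord $v_kv_i$ is otherwise covered by neither (i) nor (ii).
\item On each side of the fixed chord $e$, allow $e$ itself to be the innermost chord when no other chord lies on that side.
\item Your closing remark on counting is reversed. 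The inequality $4n-6<3n$ holds only for $n<6$, so the apex/edge-count argument settles only the small cases and gives nothing for $n\ge 6$. No edge count can suffice there, since $\tfrac{3n}{2}\le 2n-3$ once $n\ge 6$. For example, the triangular prism has $n=6$, $e=9=2n-3$ and $\delta=3$. Structural information from the embedding is genuinely needed, which your chord argument supplies.
\end{itemize}
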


\begin{lem}[{\rm \cite{dirac1964}}]\label{lem:K4minor}
Let $G$ be a $K_4$-minor-free graph of order $n\ge4$. Then $e(G)\le 2|G|-3$.
\end{lem}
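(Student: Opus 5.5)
The plan is to prove the bound by induction on $|G|$, after first reducing to the $2$-connected case. As a base, the inequality $e(G)\le 2|G|-3$ holds for every $K_4$-minor-free graph with $|G|\ge 2$: for $|G|\in\{2,3\}$ the graph has at most $1$ and $3$ edges respectively. So I would prove the statement for all orders $\ge 2$, which gives the lemma for $n\ge4$.

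\textbf{Reduction to $2$-connected graphs.} If $G$ is disconnected, adding an edge between two components creates no $K_4$-minor, since a bridge cannot lie in a $K_4$-minor model. If $G$ has a cut vertex, write $G=G_1\cup G_2$ with $V(G_1)\cap V(G_2)=\{x\}$ and $|G_i|=n_i\ge2$. Each $G_i$ is $K_4$-minor free, so by induction
$$e(G)\le (2n_1-3)+(2n_2-3)=2(n_1+n_2-1)-4<2|G|-3.$$
Hence I may assume $G$ is $2$-connected.

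\textbf{Key step: a vertex of degree at most $2$.} I would show that every $K_4$-minor-free graph with at least one edge has a vertex of degree at most $2$. Given this, deleting such a vertex $v$ and applying induction gives
$$e(G)\le e(G-v)+2\le 2(|G|-1)-3+2=2|G|-3.$$
To prove the degree claim, I argue that $\delta(G)\ge3$ forces a $K_4$-minor. Take a minor-minimal graph $G'$ with $\delta(G')\ge3$ obtained from $G$ by deleting and contracting edges; it is still $K_4$-minor free. I then choose an edge $uv$ and contract it.
\begin{itemize}
\item If $u$ and $v$ have at most one common neighbour, contraction loses at most two edges at $u,v$. Minimality should then produce a vertex of degree $\le2$ in $G'/uv$, which must be the merged vertex; this would give $d(u)+d(v)\le 4+|N(u)\cap N(v)|$, contradicting $\delta\ge3$.
\item If every edge lies in at least two triangles, I would build a $K_4$-minor directly. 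Take $uv$ with common neighbours $w,z$. The edge $uw$ has a second common neighbour $t\neq v$. Either $t=z$, which already gives $K_4$ on $\{u,v,w,z\}$, or the path through $t$ together with $2$-connectivity yields a $w$--$z$ path avoiding $u,v$. Contracting this path onto the edge $wz$ gives a $K_4$-minor.
\end{itemize}

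\textbf{Main obstacle.} The hardest part is this degree claim, specifically obtaining the $w$--$z$ path disjoint from $\{u,v\}$. If the contraction argument becomes messy, I would use the classical longest-path argument instead. Take a longest path $P=v_0v_1\cdots v_k$. Since $P$ is longest, all neighbours of $v_0$ lie on $P$, and $\delta\ge3$ gives neighbours $v_1,v_i,v_j$ with $1<i<j$. This yields a cycle $v_0v_1\cdots v_jv_0$ with the chord $v_0v_i$. Repeating the same reasoning from the other end $v_k$, or choosing the path among longest paths to maximise $j$, supplies the crossing second chord. A cycle with two crossing chords is a subdivision of $K_4$.

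Alternatively, one can invoke Lemma \ref{lem:op_delta}. A $2$-connected $K_4$-minor-free graph is series–parallel, and one would show it is outerplanar after suitably splitting at a $2$-separation. Lemma \ref{lem:op_delta} then supplies the vertex of degree at most $2$ on each side.
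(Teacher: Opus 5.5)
The paper does not prove this lemma; it quotes it from Dirac, so your argument has to stand on its own. The outer induction is fine: once every $K_4$-minor-free graph has a vertex of degree at most $2$, deleting that vertex gives $e(G)\le 2|G|-3$, and the cut-vertex reduction is not even needed. But that degree claim is the whole content of the lemma, and none of your three arguments proves it.

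\begin{itemize}
\item In your first bullet, suppose $u,v$ have exactly one common neighbour $w$ with $d_{G'}(w)=3$. Then $w$ already has degree $2$ in $G'/uv$, so minimality is satisfied and nothing forces the merged vertex to have small degree. The phrase ``which must be the merged vertex'' is unjustified, and no contradiction follows.
\item In your second bullet, the $2$-connectivity you arranged holds for $G$, not for the minor $G'$. Even in a $2$-connected graph, $\{u,v\}$ can separate $w$ from $z$. Glue two copies of $K_4$ along the edge $uv$ and take $w$, $z$ from different copies; then $t$ is the fourth vertex of $w$'s copy. A $w$--$z$ path avoiding $u,v$ is a $3$-connectivity statement, and $t$ does not supply it.
\item In the longest-path fallback, the chords at $v_k$ can live in a part of $P$ disjoint from $v_0\cdots v_j$, for instance when two copies of $K_4$ are joined by an edge. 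You also never show that maximising $j$ produces a chord crossing $v_0v_i$.
\item The outerplanar route fails as stated: $K_{2,3}$ is $2$-connected, $K_4$-minor-free and not outerplanar. So Lemma \ref{lem:op_delta} does not apply without the unexplained ``splitting''.
\end{itemize}

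A clean repair keeps your contraction idea but takes the minimal counterexample with respect to the edge bound rather than with respect to $\delta\ge 3$. Let $G$ be $K_4$-minor-free with $e(G)\ge 2n-2$ and $n$ minimal; then $n\ge 4$.
\begin{itemize}
\item Minimality applied to $G-x$ gives $d(x)\ge e(G)-(2n-5)\ge 3$.
\item Minimality applied to $G/xy$, which is $K_4$-minor-free with $e(G)-1-|N(x)\cap N(y)|$ edges, gives $|N(x)\cap N(y)|\ge 2$ for every edge $xy$.
\end{itemize}
Hence $G[N(x)]$ is nonempty with minimum degree at least $2$, so it contains a cycle $C$. Then $C$ together with $x$ is a wheel, which contracts to $K_4$, a contradiction. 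The wheel inside $N(x)$ replaces your unproven $w$--$z$ path. It also makes both the $2$-connectivity reduction and the problematic case of your first bullet unnecessary.
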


\begin{lem}[\cite{Matolcsi2022}]\label{lem:exop_P3}
Let $G$ be an $n$-vertex outerplanar graph with $n\ge3$. Then $$\mathcal{N}(P_3,G)\le\frac{n^2+3n-12}{2}.$$
\end{lem}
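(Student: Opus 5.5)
Since $\mathcal{N}(P_3,G)=\sum_{v\in V(G)}\binom{d(v)}{2}$, I would prove the bound by induction on $n$, deleting a vertex of small degree. For the base case $n=3$, an outerplanar graph on three vertices has at most three edges, and $K_3$ contains exactly $3=\frac{9+9-12}{2}$ copies of $P_3$. Every other graph on three vertices has fewer. For the inductive step, note that outerplanarity is closed under vertex deletion. The right-hand side increases by
$$\frac{n^2+3n-12}{2}-\frac{(n-1)^2+3(n-1)-12}{2}=n+1$$
when passing from $n-1$ to $n$. It therefore suffices to find a vertex $v$ whose removal destroys at most $n+1$ copies of $P_3$.

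By Lemma \ref{lem:op_delta} we may choose $v$ with $d(v)\le 2$. The copies of $P_3$ through $v$ are of two kinds: those with $v$ as centre, of which there are $\binom{d(v)}{2}$, and those with $v$ as an end, of which there are $\sum_{u\in N(v)}(d(u)-1)$. We bound these by the degree of $v$.
\begin{itemize}
\item If $d(v)=0$, nothing is lost.
\item If $d(v)=1$ with neighbour $u$, the loss is $d(u)-1\le n-2$.
\item If $d(v)=2$ with neighbours $u,w$, the loss is $1+d(u)+d(w)-2=d(u)+d(w)-1$, so we need $d(u)+d(w)\le n+2$.
\end{itemize}

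The last inequality is the key step. Write $d(u)+d(w)=|N(u)\cup N(w)|+|N(u)\cap N(w)|$. The union lies in $V(G)$, so it has at most $n$ elements. The common neighbourhood has at most $2$ vertices, since three common neighbours of $u$ and $w$ would give a $K_{2,3}$ subgraph, and an outerplanar graph contains no $K_{2,3}$. Hence $d(u)+d(w)\le n+2$. Applying the induction hypothesis to $G-v$ then gives
$$\mathcal{N}(P_3,G)\le \frac{(n-1)^2+3(n-1)-12}{2}+(n+1)=\frac{n^2+3n-12}{2}.$$

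The only real obstacle is the degree-$2$ case, which is settled by the $K_{2,3}$-freeness argument. I expect no further difficulty, as the arithmetic is tight exactly at the increment $n+1$.
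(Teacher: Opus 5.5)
Your proof is correct. The paper contains no argument for this lemma to compare against; it is simply quoted from \cite{Matolcsi2022}. Your induction is therefore a self-contained replacement for the citation.

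All the steps check out:
\begin{itemize}
\item The bound increases by exactly $n+1$ from $n-1$ to $n$.
\item A vertex of degree at most $2$ exists by Lemma~\ref{lem:op_delta}.
\item For $d(v)=2$, the number of $P_3$-copies through $v$ is $d(u)+d(w)-1$.
\item Since $N(u)\cap N(w)$ contains neither $u$ nor $w$, three common neighbours would indeed span a $K_{2,3}$ subgraph, so $d(u)+d(w)\le n+2$.
\end{itemize}
The inductive step is tight. In the graph obtained by joining one vertex to all vertices of a path $P_{n-1}$, deleting an end of the path loses exactly $n+1$ copies. This is consistent with the stated bound being attained by that graph.

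Structurally, your proof is the outerplanar analogue of the argument the paper uses for $f(n,P_3)$ in Theorem~\ref{thm:P3}. There, a $3$-vertex of a triangulation is deleted, and the loss $3+\sum_{i=1}^{3}(d(u_i)-1)$ is controlled by the three-vertex degree-sum bound of Lemma~\ref{lem:d1d2d3}. You instead delete a vertex of degree at most $2$ and control the loss by a two-vertex degree-sum bound derived from $K_{2,3}$-freeness.

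The citation buys the authors brevity. Your route makes this part of the paper self-contained, using only Lemma~\ref{lem:op_delta} and the standard fact that outerplanar graphs contain no $K_{2,3}$.
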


\begin{lem}\label{lem:maximal}
Let $H$ be a graph with $H\in \{P_3,K_{1,3}^+,K_4^-\}$, and $G$ be an $n$-vertex planar graph with $n\ge4$ and $\mathcal{N}(H, G) = f(n, H)$, then $G$ is a maximal planar graph, that is, a triangulation.
\end{lem}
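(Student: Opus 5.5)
The plan is to argue by contradiction using edge-addition monotonicity. Suppose $G$ attains $\mathcal{N}(H,G)=f(n,H)$ but is not a triangulation. Then there are two non-adjacent vertices $u,v$ with $G+uv$ still planar. We may take $u,v$ on the boundary of a common face $f$ of some embedding of $G$, and draw $uv$ inside $f$. Every $H$-copy of $G$ is still an $H$-copy of $G+uv$, so $\mathcal{N}(H,G+uv)\ge \mathcal{N}(H,G)$. It then suffices to exhibit one $H$-copy in $G+uv$ that uses the new edge $uv$. That gives $\mathcal{N}(H,G+uv)>f(n,H)$ with $G+uv$ an $n$-vertex planar graph, a contradiction. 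So the whole proof reduces to a local existence statement: for a suitable choice of the missing edge $uv$, some copy of $H$ contains $uv$.

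\textbf{Step 1: reduce to the connected case.} If $G$ is disconnected, pick vertices $u,v$ in different components, both on the outer face of their components. Adding $uv$ keeps planarity. Because $n\ge4$, the graph $G+uv$ contains $uv$ together with further edges at $u$ or $v$, which is enough to build a $P_3$ through $uv$ provided some component has an edge. For $K_{1,3}^+$ and $K_4^-$, which contain a triangle, a bridge $uv$ cannot lie on a triangle, so we need a different argument. I would instead show directly that a disconnected or non-$2$-connected extremal graph loses copies. The simplest way is to add several edges at once: join the components inside a common face so that a triangle through the new edges appears. Here we use the fact that every graph involved has an edge, because otherwise $\mathcal{N}(H,G)=0<f(n,H)$.

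\textbf{Step 2: the connected case.} Take a face $f$ whose boundary walk has length at least $4$; such a face exists since $G$ is not a triangulation. Let its boundary walk contain consecutive vertices $a,u,b,v$, with $u,v$ non-adjacent. If every such pair is adjacent, the face is bounded by a walk with repeated vertices, and we must handle that separately (see below). Add the chord $uv$ inside $f$.
\begin{itemize}
\item For $H=P_3$, the path $a\,u\,v$ is a new copy.
\item For $K_{1,3}^+$, the vertices $u,b,v$ form a new triangle $ubv$, since $ub$ and $bv$ are boundary edges. The vertex $a$ is adjacent to $u$, so $\{a,u,b,v\}$ spans a triangle $ubv$ plus the pendant edge $ua$. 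This is a copy of $K_{1,3}^+$ containing $uv$. We need $a\ne b$ and $a\ne v$, which holds when the face has at least $4$ distinct boundary vertices.
\item For $K_4^-$, we need two triangles sharing an edge, with $uv$ among the five edges. The triangle $ubv$ is new. We also need a second vertex $w\ne b$ adjacent to both $u$ and $v$, or to both $u$ and $b$, or to both $b$ and $v$. The edges $ub$ and $bv$ each lie on another face, so I would use the face on the other side of $ub$. If $G$ is $2$-connected and that face is a triangle $ubw$, then $\{u,b,v,w\}$ contains the triangles $ubv$ and $ubw$, which share $ub$, giving a $K_4^-$. If that face is not a triangle, I would instead choose the pair $(u,v)$ more cleverly. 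One option is to take $f$ to be a non-triangular face adjacent to a triangular face, which exists whenever $G$ has a triangle. Then choose $u,b$ on the common edge $ub$ with the triangle $ubw$, and let $v$ be the next boundary vertex of $f$ after $b$.
\end{itemize}

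\textbf{Expected main obstacle.} The degenerate situations are the hardest part: cut vertices, faces whose boundary walk repeats vertices, the chord $uv$ already being an edge of $G$ drawn elsewhere, and graphs with no triangles at all. For $K_4^-$ we additionally need an adjacent triangular face. I would first settle these by a separate comparison. Every $G$ with $\mathcal{N}(H,G)=f(n,H)>0$ must contain a triangle for $H\ne P_3$. With a triangle present, a non-triangular face sharing an edge with a triangular face exists unless all faces are triangles. When a boundary walk repeats a vertex, I would choose $u,v$ to be neighbours of a repeated cut vertex $c$ lying in different blocks. The chord $uv$ then creates the triangle $ucv$, and a further neighbour of $c$ supplies the remaining vertex or vertices.
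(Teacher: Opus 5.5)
Your reduction (exhibit one $H$-copy through a newly added edge) is valid, but the proposal never completes it in the cases where $H$ contains a triangle, and several of the sketched steps fail.

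\textbf{Step 2 for $K_4^-$.} You assert that a non-triangular face adjacent to a triangular face exists whenever $G$ has a triangle. A triangle of $G$ need not bound a face, since it may be separating. For a $3$-connected plane graph all of whose triangles are separating (such graphs can contain $K_4^-$), no embedding has a triangular face at all. Even when a triangular face $ubw$ exists, taking $v$ to be the boundary vertex after $b$ may give $uv\in E(G)$ already, drawn outside $f$. You list this as an obstacle but never resolve it.

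\textbf{Cut-vertex case.} Adding $uv$ for neighbours $u,v$ of $c$ in different blocks gives a $K_4^-$ only if some vertex is adjacent to two of $u,c,v$. That requires a triangle on $cu$ or $cv$ inside a block at $c$. If both blocks are cycles of length at least $4$, the only triangle through $uv$ is $ucv$, and no $K_4^-$ contains $uv$.

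\textbf{Step 1.} For $K_{1,3}^+$ and $K_4^-$ you only say you would add several edges so that a triangle appears. That is a plan, not an argument, and a triangle is not enough for $K_4^-$.

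\textbf{The missing idea.} The fix removes all of this case analysis, and you gesture at it in Step 1 without carrying it through. Compare $G$ with a triangulation $G'\supseteq G$ rather than with $G+uv$, and delete just one edge. Take any $e=uv\in E(G')\setminus E(G)$. Then $e$ lies on two triangular faces $uvw_1$ and $uvw_2$ of $G'$ with $w_1\neq w_2$, since $n\ge4$. So $\{u,v,w_1,w_2\}$ spans a $K_4^-$ in $G'$ in which $e$ joins the two degree-$3$ vertices. This $K_4^-$ contains a $P_3$, a $K_{1,3}^+$ and a $K_4^-$ through $e$. Since $G\subseteq G'-e$,
\[
\mathcal{N}(H,G)\le\mathcal{N}(H,G'-e)<\mathcal{N}(H,G'),
\]
which contradicts extremality. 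This is the paper's proof. The local structure around the new edge is supplied by $G'$, so nothing about the faces, connectivity or existing adjacencies of $G$ needs to be controlled.
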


\begin{proof}
Suppose that $G$ is not maximal. 
Let $G'$ be a triangulation with $G\subset G'$. 
Let $e=uv\in E(G'-G)$. 
Since $G'$ is a triangulation, $uv$ belongs to two triangular faces, and we may choose a copy $J$ of $K_4^-$ in $G'$  containing $uv$ such that $d_J(u)=d_J(v)=3$. 
Since $H\in \{P_3,K_{1,3}^+,K_4^-\}$, $J$ contains an $H$-copy containing $e$, which gives $\mathcal{N}_e(H,G')>0$.
Thus, $\mathcal{N}(H,G)\le \mathcal{N}(H,G'-e)<\mathcal{N}(H,G'-e)+\mathcal{N}_e(H,G')=\mathcal{N}(H,G')$, a contradiction.
\end{proof}

\begin{lem}\label{lem:maximize}
Let $(x_1,\dots,x_n)$ be an integer sequence with $4\le x_1\le\dots\le x_n\le n-2$ and $\sum_{i=1}^{n}x_i=6n-12$. Then $$\sum_{i=1}^{n}x_i^2\leq 2n^2+8n-24,$$ with equality if and only if $(x_1, x_2, \dots, x_n)=(4,\dots,4,n-2,n-2)$.
\end{lem}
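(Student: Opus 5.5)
We reduce to a convexity-type estimate by shifting the sequence so that the lower bound becomes $0$. Set $y_i=x_i-4$ for $1\le i\le n$. The hypotheses become $0\le y_1\le\dots\le y_n\le n-6$ and
$$\sum_{i=1}^n y_i=6n-12-4n=2n-12=2(n-6).$$
(In particular the hypotheses force $n\ge 6$, since $4\le x_i\le n-2$.)

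Next we rewrite the target quantity in terms of the $y_i$:
$$\sum_{i=1}^n x_i^2=\sum_{i=1}^n y_i^2+8\sum_{i=1}^n y_i+16n=\sum_{i=1}^n y_i^2+32n-96.$$
The desired inequality $\sum x_i^2\le 2n^2+8n-24$ is therefore equivalent to
$$\sum_{i=1}^n y_i^2\le 2n^2-24n+72=2(n-6)^2.$$

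The key step is the elementary bound $y_i^2\le (n-6)\,y_i$, valid because $0\le y_i\le n-6$. Summing it gives
$$\sum_{i=1}^n y_i^2\le (n-6)\sum_{i=1}^n y_i=2(n-6)^2,$$
which is exactly the reduced inequality.

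For the equality case, note that equality holds if and only if $y_i^2=(n-6)y_i$ for every $i$, that is, each $y_i\in\{0,n-6\}$.
\begin{enumerate}
\item If $n>6$, the sum condition $\sum y_i=2(n-6)$ forces exactly two of the $y_i$ to equal $n-6$ and the rest to be $0$. By monotonicity this is $(x_1,\dots,x_n)=(4,\dots,4,n-2,n-2)$.
\item If $n=6$, every $y_i$ is $0$, so every $x_i=4=n-2$. This is again the sequence $(4,\dots,4,n-2,n-2)$.
\end{enumerate}
Conversely, this sequence clearly attains the bound. No step is a real obstacle; the only care needed is the arithmetic of the shift and the degenerate case $n=6$.
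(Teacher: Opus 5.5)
Your proof is correct, and it takes a different route from the paper. The paper argues extremally. It takes a sequence maximizing $\sum t_i^2$ over the finite feasible set. A discrete smoothing move (replacing $t_j,t_k$ with $4<t_j\le t_k<n-2$ by $t_j-1,t_k+1$, which raises the sum of squares by $2(t_k-t_j+1)$) shows that at most one entry lies strictly between $4$ and $n-2$. The total $6n-12$ then forces $t_{n-1}=t_n=n-2$ and $t_1=\dots=t_{n-2}=4$, so the bound and the equality case follow because every maximizer has this shape. You instead sum the pointwise chord inequality $(x_i-4)(n-2-x_i)\ge 0$ (your $y_i^2\le (n-6)y_i$) against the fixed total. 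This gives the bound in one line and makes the equality case immediate: equality forces every $x_i$ to be an endpoint of $[4,n-2]$, and the total then fixes how many equal $n-2$. Your version is shorter and uses neither integrality nor the existence of a maximizer, so it also holds for real sequences; your treatment of the degenerate case $n=6$ is right. What the paper's smoothing argument buys is robustness. Your chord bound is tight here only because $6n-12=4(n-2)+2(n-2)$ happens to be realizable with endpoint values alone. The smoothing argument would still locate the true maximum, with one interior entry, for a total not of that form.
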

\begin{proof}
Let $(t_1,\dots,t_n)$ be an integer sequence with $4\le t_1\le\dots\le t_n\le n-2$ that maximizes $\sum_{i=1}^{n}t_i^2$ subject to $\sum_{i=1}^{n}t_i=6n-12$. Let $q$ denote the number of elements in $(t_1,\dots,t_n)$ strictly between $4$ and $n-2$.
If $q\geq 2$, then there exist indices $1\le j<k\le n$ such that $4< t_j \le t_k <n-2$. Let $(t_1',\dots,t_n')$ be obtained from $(t_1,\dots,t_n)$ by replacing $t_j$ with $t_j-1$ and $t_k$ with $t_k+1$, and then reordering if necessary so that $t_1' \le \dots \le t_n'$. 
Then $(t_1',\dots,t_n')$ satisfies $4\le t'_i\le n-2$ for all $i$ and $\sum_{i=1}^{n}t'_i=6n-12$.
Moreover,
$\sum_{i=1}^n (t_i')^2= \sum_{i=1}^n t_i^2 +2(t_k - t_j + 1)> \sum_{i=1}^n t_i^2$,
a contradiction.
Hence, $q\leq 1$. 

If $t_{n-1}<n-2$, then $\sum_{i=1}^{n}t_i\le 4(n-2)+ (n-3)+(n-2)<6n-12=\sum_{i=1}^{n}t_i$, a contradiction. So $t_{n-1} =t_n= n-2$.
If $t_{n-2}>4$, then $\sum_{i=1}^{n}t_i\ge 4(n-3)  + 5+ 2(n -2)= 6n-12 + 1>\sum_{i=1}^{n}t_i$, a contradiction. So $t_1=\dots=t_{n-2}=4$. 
Thus, $(t_1,\dots,t_n)=(4,\dots,4,n-2,n-2)$. It follows that
$\sum_{i=1}^{n} x_i^2\le\sum_{i=1}^{n} t_i^2=2(n-2)^2+ 4^2(n-2) = 2n^2+8n-24$,
with equality if and only if $(x_1, x_2, \dots, x_n)=(4,\dots,4,n-2,n-2)$.
\end{proof}

\begin{lem}\label{lem:delta4_d2x}
Let $G$ be an $n$-vertex planar graph with $\delta(G)\ge4$. Then $\Delta(G)\le n-2$ and $$\sum_{x\in V(G)}d^2(x)\le 2n^2+8n-24,$$ with equality if and only if there are two $(n-2)$-vertices and $(n-2)$ $4$-vertices.
\end{lem}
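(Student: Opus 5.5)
The proof has two parts: first show $\Delta(G)\le n-2$, and then reduce the degree-square bound to Lemma~\ref{lem:maximize}.

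\textbf{Step 1: $\Delta(G)\le n-2$.} Suppose some vertex $x$ has $d(x)=n-1$. Then $G-x$ is an $(n-1)$-vertex graph. It is outerplanar, because all its vertices lie on the face of $G-x$ that contained $x$; equivalently, a $K_4$- or $K_{2,3}$-minor in $G-x$ together with $x$ would give a $K_5$- or $K_{3,3}$-minor in $G$. By Lemma~\ref{lem:op_delta}, $G-x$ has a vertex $y$ with $d_{G-x}(y)\le 2$. Hence $d_G(y)\le 3$, contradicting $\delta(G)\ge4$. So $\Delta(G)\le n-2$.

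\textbf{Step 2: reduce to Lemma~\ref{lem:maximize}.} Note that $\delta(G)\ge4$ forces $n\ge6$, since $K_5$ is not planar. Let $G'$ be a triangulation containing $G$ on the same vertex set. Then $\delta(G')\ge\delta(G)\ge4$. Applying Step~1 to $G'$ gives $\Delta(G')\le n-2$. Since $e(G')=3n-6$, we have $\sum_x d_{G'}(x)=6n-12$. Sort the degrees of $G'$ as $4\le x_1\le\dots\le x_n\le n-2$. Lemma~\ref{lem:maximize} then gives
$$\sum_{x} d_{G'}^2(x)\le 2n^2+8n-24.$$
Degrees only increase when passing from $G$ to $G'$, so
$$\sum_x d_G^2(x)\le \sum_x d_{G'}^2(x)\le 2n^2+8n-24.$$

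\textbf{Step 3: the equality case.} Suppose equality holds. Then both inequalities above are equalities.
\begin{itemize}
\item From the first, Lemma~\ref{lem:maximize} says the degree sequence of $G'$ is $(4,\dots,4,n-2,n-2)$.
\item From the second, $d_G(x)=d_{G'}(x)$ for every vertex $x$, because every degree is positive and none decreases. Hence $G=G'$.
\end{itemize}
So $G$ has exactly two $(n-2)$-vertices and $n-2$ vertices of degree $4$.

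Conversely, suppose $G$ has two $(n-2)$-vertices and $n-2$ vertices of degree $4$. Then
$$\sum_x d^2(x)=2(n-2)^2+16(n-2)=2n^2+8n-24,$$
so the bound is attained.

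\textbf{Main obstacle.} The delicate point is Step~1, specifically justifying that $G-x$ is outerplanar when $x$ is adjacent to all other vertices. The standard argument is that in a plane embedding, all neighbours of $x$ lie on a single face of $G-x$, namely the face that contained $x$. Re-embedding so that this face is the outer face makes $G-x$ outerplanar. Everything else is bookkeeping.
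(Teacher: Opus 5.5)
Your proof is correct and follows the paper's argument: the same outerplanarity argument via Lemma~\ref{lem:op_delta} gives $\Delta(G)\le n-2$, and then you pass to a triangulation and apply Lemma~\ref{lem:maximize}. Your explicit treatment of the triangulation $G'\supseteq G$ and of the equality case (forcing $G=G'$) just spells out what the paper compresses into ``without loss of generality, assume that $G$ is a maximal planar graph''.
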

\begin{proof}
If $\Delta(G) = n-1$, that is, there is a vertex $v\in V(G)$ with $d(v)=n-1$, then $G-v$ is an outerplanar graph. By Lemma \ref{lem:op_delta}, $\delta(G-v)\le 2$. This implies that $\delta(G)\le 3$, a contradiction. So $\Delta(G)\le n-2$. 

Without loss of generality, assume that $G$ is a maximal planar graph. Then $2e(G)=\sum_{i=1}^{n}d_i=6n-12$. Let $(d_1, d_2, \dots, d_n)$ be the degree sequence of $G$ such that $4\le d_1\le\dots\le d_n\le n-2$. 
By Lemma \ref{lem:maximize}, $\sum_{i=1}^{n} d_i^2\le 2n^2+8n-24$, with equality if and only if $(d_1, d_2, \dots, d_n)=(4,\dots,4,n-2,n-2)$. At the same time, this degree sequence is realized by the double-wheel graph $WW_n$.
\end{proof}

\section{Proof of Theorem \ref{thm:K13+}}\label{sec:proof_K13+}
\begin{lem}\label{lem:N_K13+}
Let $G$ be an $n$-vertex graph. Then $$\mathcal{N}(K_{1,3}^+,G) =\sum\limits_{xy\in E(G)}(e(G[N(x)-y])+e(G[N(y)-x]).$$
\end{lem}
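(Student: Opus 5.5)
The plan is a double-counting argument that charges each copy of $K_{1,3}^+$ to a distinguished edge. First fix the structure of $K_{1,3}^+$: it has a center $c$ of degree $3$ adjacent to $a,b,d$, plus the extra edge $ab$. So it is a triangle $cab$ together with a pendant edge $cd$. The pendant edge $cd$ is uniquely determined in any copy, since it is the only edge of the copy not lying in the triangle.

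We therefore map each copy to the ordered pair (pendant edge, orientation), that is, to the edge $xy=cd$ together with which endpoint is the center. Summing over edges $xy\in E(G)$, the copies whose pendant edge is $xy$ split into two disjoint classes, according to whether $x$ or $y$ is the center.

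Suppose the center is $x$ and the pendant vertex is $y$. Then the copy is determined by the triangle edge $ab$, which must satisfy three conditions:
\begin{itemize}
\item $a,b\in N(x)$, so that $xab$ is a triangle;
\item $ab\in E(G)$;
\item $a,b\neq y$.
\end{itemize}
Equivalently, $ab$ is an edge of $G[N(x)-y]$. Conversely, every edge $ab$ of $G[N(x)-y]$ yields a subgraph with edge set $\{xy,xa,xb,ab\}$ on the four distinct vertices $x,y,a,b$, and this subgraph is isomorphic to $K_{1,3}^+$ with pendant edge $xy$ and center $x$. Symmetrically, the copies with center $y$ correspond bijectively to the edges of $G[N(y)-x]$.

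It remains to check that the map from copies to triples (edge $xy$, choice of center, triangle edge) is a bijection. Since a copy is a subgraph, it is determined by its edge set, and the triple recovers that edge set. Conversely, the triple is recovered from the copy: the pendant edge is unique, and the center is its endpoint of degree $3$ in the copy. So no copy is counted twice and none is missed. Summing over all edges gives the stated formula.

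There is no real obstacle here. The only point needing care is the uniqueness of the pendant edge and the center, which guarantees there is no double counting.
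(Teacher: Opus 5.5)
Your proof is correct and takes the same approach as the paper: both charge each copy of $K_{1,3}^+$ to its unique pendant edge $xy$ together with the choice of center, and identify the copies with center $x$ (resp.\ $y$) with the edges of $G[N(x)-y]$ (resp.\ $G[N(y)-x]$). Your explicit check that the pendant edge and the center are uniquely determined makes precise the no-double-counting step that the paper leaves implicit.
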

\begin{proof}
Let $xy\in E(G)$. If there exists a copy $H$ of $K_{1,3}^+$ in $G$ with $V(H)=\{x, y, u,v\}$ and $d_H(x)=1$, then $uv\in E(G)$ and $\{u,v\}\subseteq N_G(y)$. So $e(G[N(y)-x])$ is just the number of $K_{1,3}^+$-copies in which $x$ is the pendant vertex. Hence $\mathcal{N}(K_{1,3}^+,G) =\sum_{xy\in E(G)}(e(G[N(x)-y])+e(G[N(y)-x]).$
\end{proof}

\begin{lem}\label{lem:K13+,delta4}
Let $G$ be an $n$-vertex maximal planar graph and $\delta(G)\ge4$. Then 
$$\mathcal{N}(K_{1,3}^+,G) < 4n^2-12n-4.$$
\end{lem}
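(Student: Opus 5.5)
Starting from Lemma \ref{lem:N_K13+}, I would swap the order of summation. For each vertex $y$ and each neighbor $x$ of $y$, the term $e(G[N(y)-x])$ equals $e(G[N(y)])$ minus the number of neighbors of $x$ inside $N(y)$. Summing over $x\in N(y)$ gives
$$\mathcal{N}(K_{1,3}^+,G)=\sum_{y\in V(G)}\Big(d(y)\,e(G[N(y)])-2e(G[N(y)])\Big)=\sum_{y}(d(y)-2)\,e(G[N(y)]).$$
In a maximal planar graph with $\delta(G)\ge 4$, each link $G[N(y)]$ contains the Hamiltonian cycle of the link, so it has at least $d(y)$ edges. It is also outerplanar: it lies in the planar graph $G-y$ with all its vertices on one face. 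Lemma \ref{lem:K4minor} then gives $e(G[N(y)])\le 2d(y)-3$. Hence
$$\mathcal{N}(K_{1,3}^+,G)\le\sum_y (d(y)-2)(2d(y)-3)=\sum_y\big(2d^2(y)-7d(y)+6\big).$$

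Next I would apply Lemma \ref{lem:delta4_d2x} together with $\sum_y d(y)=6n-12$. This gives
$$\mathcal{N}(K_{1,3}^+,G)\le 2(2n^2+8n-24)-7(6n-12)+6n=4n^2-20n+36.$$
This is strictly less than $4n^2-12n-4$ exactly when $8n>40$, that is, for $n>5$.

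The small cases I would handle separately. For $n\le 5$, a maximal planar graph with $\delta\ge 4$ cannot exist, since $\delta\ge4$ forces $6n-12\ge 4n$, i.e.\ $n\ge 6$. So only $n\ge 6$ matters, and there the strict inequality holds. If $n=6$ gives equality issues, I would note that $4n^2-20n+36=60<104$ and the strict bound is clear.

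The main obstacle is justifying $e(G[N(y)])\le 2d(y)-3$. The route I would take is the outerplanarity of the link, which lets me invoke Lemma \ref{lem:K4minor} (outerplanar graphs are $K_4$-minor-free). That lemma needs order at least $4$, which holds since $d(y)\ge4$. The outerplanarity itself follows because contracting the face left by deleting $y$, or adding $y$ back as an apex, shows that the link plus an apex vertex is planar.
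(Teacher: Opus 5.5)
Your proof is correct, and it takes a genuinely different and cleaner route than the paper. The paper works directly with the edge sum of Lemma~\ref{lem:N_K13+} and bounds each term on its own, $e(G[N(y)-x])\le 2d(y)-5$. This gives $\sum_y d(y)(2d(y)-5)=2\sum_y d^2(y)-30n+60\le 4n^2-12n-4-2(n-8)$, which is strictly below the target only for $n\ge 9$. So the paper has to treat $n=6$ and $n=7$ separately, identifying $G$ as $WW_6$ or $WW_7$ and counting directly. At $n=8$ it also has to analyze the equality case (two $6$-vertices, six $4$-vertices), show $G=WW_8$, and count again.

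You instead sum exactly over $x\in N(y)$ first. This yields the identity $\mathcal{N}(K_{1,3}^+,G)=\sum_y (d(y)-2)\,e(G[N(y)])$, which is also the direct count: choose the center $y$, an edge of its link, and a third neighbor. Only then do you apply the outerplanar edge bound, once per vertex. Since $d(y)(2d(y)-5)-(d(y)-2)(2d(y)-3)=2d(y)-6\ge 2$, you save $6n-24$ in total. Your resulting bound $4n^2-20n+36$ is therefore strictly below $4n^2-12n-4$ for every $n\ge 6$, so you need neither small cases nor any structural identification of double wheels.

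The only slip is arithmetic: at $n=6$ the target is $4\cdot 36-72-4=68$, not $104$. This is harmless, since $60<68$ and your general inequality $8n>40$ already covers $n=6$.

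Two further remarks on the write-up. The observation that each link contains a Hamiltonian cycle is never used and can be dropped. For $K_4$-minor-freeness of $G[N(y)]$, the quickest justification is that a $K_4$-minor there, together with the apex $y$, would give a $K_5$-minor in $G$.
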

\begin{proof}
By Lemma \ref{lem:delta4_d2x}, $\Delta(G)\le n-2$.
Since $G$ is planar and $\delta(G)\ge 4$, $n\ge 6$. Suppose that $n =6$. Then $d(x) =4$ for all $x\in V(G)$, that is, $G$ is a $4$-regular graph and it follows that $G=K_{2,2,2}=WW_6$, which is maximal with respect to having no $K_5$-minors. Thus $e(G[N(x)]-y]) = 2$ for any $xy\in E(G)$. By Lemma \ref{lem:N_K13+}, $\mathcal{N}(K_{1,3}^+,G)=4\times e(G) =48<4n^2-12n-4=68$. 

Suppose that $n = 7$. Then $4\le d(v) \le 5$ for any $v\in V(G)$. Since $e(G) = 3n - 6$, $G$ has two 5-vertices and five $4$-vertices. Let $v$ be a $5$-vertex of $G$ and $\{u\} = V(G)\setminus N[v]$. Since $\delta(G)\ge 4$, $G[N(v)]$ is an outerplanar graph with the minimum degree $2$ and contains a 5-cycle $C = v_1v_2\dots v_5v_1$. If $C$ has a chord, say $v_1v_3$, then $u$ must be in the face formed by $v_1, v_3, v_4, v_5$ since $d(u)\ge 4$, and it follows that $d(v_2)=3$, a contradiction. So $G[N(v)]= C$ and then $d(u)=5$, $N(u)=N(v)$ and $G=WW_7$. By Lemma \ref{lem:N_K13+}, 
$$\mathcal{N}(K_{1,3}^+,G)=4\times 5+5\times10=70<4n^2-12n-4=108.$$ 

So we assume that $n\ge 8$. Let $xy\in E(G)$. Since $G$ is a planar graph, $G[N(y)-x]$ is $K_4$-minor-free. By Lemma \ref{lem:K4minor}, $e(G[N(y)-x])\le 2|N(y)-x|-3=2d(y)-5$. By Lemma \ref{lem:N_K13+}, we have
\begin{equation*}\label{equ:N_K13+}
 \begin{aligned}
\mathcal{N}(K_{1,3}^+,G) &\le \sum\limits_{xy\in E(G)}\left((2d(x)-5)+(2d(y)-5)\right)\\
 &=2\sum\limits_{xy\in E(G)}\left(d(x)+d(y)\right)-10(3n-6)\\
 &=2\sum\limits_{x\in V(G)}d^2(x)-30n+60. 
 \end{aligned}
\end{equation*}
It follows from Lemma \ref{lem:delta4_d2x} that 
$$\mathcal{N}(K_{1,3}^+,G)\le 2(2n^2+8n -24)-30n+60=4n^2-12n-4 -2(n-8)\le 4n^2-12n-4,$$ 
with equality only if $n=8$ and $G$ has two $6$-vertices and six $4$-vertices. 
We claim that $\mathcal{N}(K_{1,3}^+,G)<4n^2-12n-4$.  
Let $v\in V(G)$ with $d(v)=6$, $V(G)\setminus N[v] = \{u\}$ and $N(v)=\{v_1, \dots, v_6\}$ such that $G[N(v)]$ contains a cycle $v_1v_2\dots v_6v_1$. If $v_1v_3\in E(G)$, then $uv_2\in E(G)$( for otherwise $d(u_2)= 3$, a contradiction) and it follows that $N(u) =\{v_1, v_2, v_3\}$ and $d(u)=3$, a contradiction. So $v_1v_3\notin E(G)$. Similarly, $v_1v_5\notin E(G)$. Thus $d(v_1)\le 5$ and then $d(v_1) =4$. By the same arguments, we have $d(v_i)=4$ for any $i\in \{1, 2, \dots, 6\}$. So $d(u)=6$. It follows that $N(u)=N(v)$ and $G=WW_8$. By Lemma \ref{lem:N_K13+}, 
\begin{equation*}
 \mathcal{N}(K_{1,3}^+,G)=4\times 6+6\times12=96<4n^2-12n-4=156,
\end{equation*}
as claimed.
\end{proof}
Let $r,s,t\ge1$ be integers. We define four graphs as follows. 

\begin{itemize}
    \item $G_1 = H_5$ with 
    $V(G_1)=\{v, v_1, v_2, v_3, w\}$, where $N_{G_1}(v)=\{v_1,v_2,v_3\}$.
    
    \item $G_2 = H_{r+2}$ with 
    $V(G_2)=\{a_1,\dots,a_r,z_1,z_2\}$, where $a_1a_2\dots a_r=P_r$ and 
    $d_{G_2}(z_1)=d_{G_2}(z_2)=r+1$.
    
    \item $G_3 = H_{s+2}$ with  
    $V(G_3)=\{b_1,\dots,b_s,z_3,z_4\}$, where $b_1b_2\dots b_s=P_s$ and 
    $d_{G_3}(z_3)=d_{G_3}(z_4)=s+1$.
    
    \item $G_4 = H_{t+2}$ with 
    $V(G_4)=\{c_1,\dots,c_t,z_5,z_6\}$, where $c_1c_2\dots c_t=P_t$ and 
    $d_{G_4}(z_5)=d_{G_4}(z_6)=t+1$.
\end{itemize}

\begin{figure}
  \centering
  \includegraphics[width=7cm]{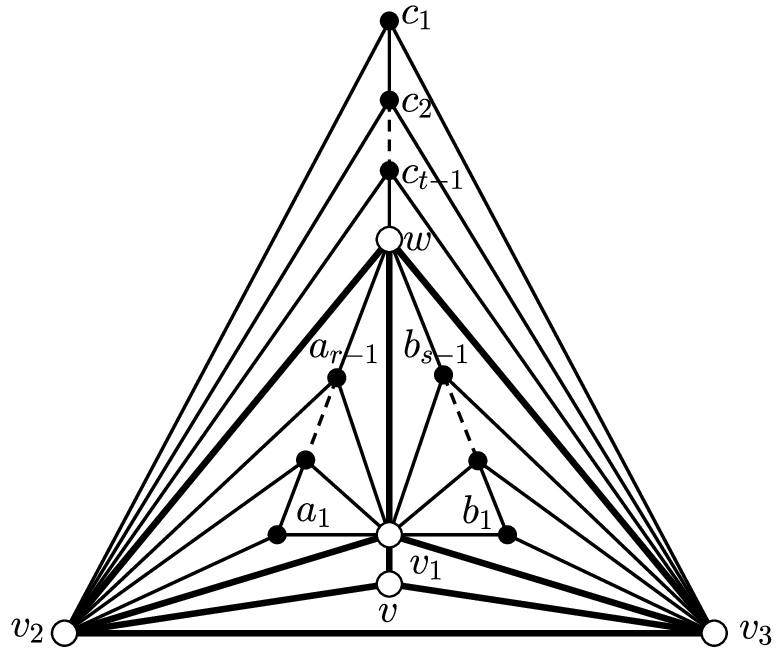}
  \caption{$G_{r,s,t}$.}\label{Fig:G_rst}
\end{figure}

Define $G_{r,s,t}$ as the graph obtained from the disjoint union of $G_1,G_2,G_3$ and $G_4$ by identifying the vertices in each of the sets  $\{w,a_r,b_s,c_t\}$, $\{v_1,z_1,z_3\}$, $\{v_2,z_2,z_5\}$, and $\{v_3,z_4,z_6\}$, respectively, into a single vertex (see Figure~\ref{Fig:G_rst}). 
Clearly, $G_{r,s,t}$ is a triangulation with $|G_{r,s,t}|=r+s+t+2$. 
Let $\mathcal{G}=\{G_{r,s,t}: r,s,t\ge1\}$.

\begin{lem}\label{lem:iff}
Let $G$ be a maximal planar graph. Then $G\in \mathcal{G}$ if and only if $G$ has a vertex $v$ of degree $3$ with $N(v)=\{v_1,v_2,v_3\}$ satisfying the following two conditions:
\begin{enumerate}[{\rm(i)}]
 \item \label{i} there exists exactly one vertex $w\in V(G)\setminus N[v]$ such that $|N(w)\cap N(v)|=3;$
 \item \label{ii} For any vertex $u\in V(G)\setminus(N[v]\cup\{w\})$, $|N(u) \cap N(v)|=2$.
\end{enumerate}
\end{lem}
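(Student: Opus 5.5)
The forward direction is a direct check on $G_{r,s,t}$, and the converse reconstructs the three pieces $G_2,G_3,G_4$ from the structure forced by (i) and (ii) together with maximality.

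\emph{Forward direction.} Let $G=G_{r,s,t}$ and take $v$ to be the $3$-vertex of $G_1$, so $N(v)=\{v_1,v_2,v_3\}$. Take $w$ to be the identified vertex $w=a_r=b_s=c_t$. In $G_1=H_5$, the vertex $w$ is adjacent to $v_1,v_2,v_3$, so $|N(w)\cap N(v)|=3$. Every other vertex lies in exactly one of the paths $a_1\dots a_{r-1}$, $b_1\dots b_{s-1}$, $c_1\dots c_{t-1}$. Inside its piece $H_{r+2}$ (respectively $H_{s+2}$, $H_{t+2}$) it is adjacent to both apex vertices, and these are exactly two of $v_1,v_2,v_3$. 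It is not adjacent to the third vertex of $N(v)$, because that vertex lies only on the boundary of the other pieces. Hence (ii) holds, and $w$ is the unique vertex with three neighbours in $N(v)$, which gives (i).

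\emph{Converse, Step 1: three regions.} Let $G$ be a triangulation with $v,w$ as in (i) and (ii). Since $G$ is maximal and $d(v)=3$, the vertices $v_1,v_2,v_3$ form a triangle whose inside contains only $v$. The vertex $w$ is adjacent to all of $v_1,v_2,v_3$. So the three cycles $wv_1v_2$, $wv_2v_3$, $wv_3v_1$ split the rest of the plane into three regions $R_{12},R_{23},R_{31}$, each bounded by a triangle; some of these regions may be empty. By planarity, a vertex $u\neq v,w$ inside $R_{12}$ can only be adjacent to $v_1$ and $v_2$ among $N(v)$. Condition (ii) then says that $u$ is adjacent to both $v_1$ and $v_2$.

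\emph{Converse, Step 2: each region is an $H_{k+2}$.} Fix $R_{12}$ with its interior vertex set $U$, $|U|=k-1\ge0$. The subgraph induced on $U\cup\{w,v_1,v_2\}$ is a triangulation of the disk bounded by $wv_1v_2$ in which $v_1$ and $v_2$ are adjacent to everything. Its vertex count is $k+2$ and its edge count is $3(k+2)-6=3k$. The edges at $v_1$ or $v_2$ number $2k+1$: the edge $v_1v_2$ plus $2k$ edges to $U\cup\{w\}$. Hence $G[U\cup\{w\}]$ has exactly $k-1$ edges.

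\emph{Converse, Step 3: identifying the path.} The vertices of $U\cup\{w\}$ are arranged in the angular order around $v_1$ between $v_2$ and $v_2$. Every face is a triangle, so consecutive neighbours of $v_1$ are adjacent, and the set $U\cup\{w\}$ contains a spanning path starting at $w$. A spanning path on $k$ vertices already uses $k-1$ edges, so by the edge count $G[U\cup\{w\}]$ is exactly this path. Thus the region induces $H_{k+2}$ with path ending at $w$ and apices $v_1,v_2$. An empty region corresponds to $k=1$, giving just the triangle, which is the degenerate $H_3$.

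\emph{Converse, conclusion.} Applying Step 2 and Step 3 to all three regions, with apex pairs $\{v_1,v_2\}$, $\{v_2,v_3\}$, $\{v_3,v_1\}$ and common path endpoint $w$, reproduces exactly the gluing that defines $G_{r,s,t}$, with $r,s,t\ge1$. So $G\in\mathcal{G}$.

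The main obstacle is Step 3: checking that the angular order around $v_1$ really yields a spanning path of $U\cup\{w\}$ ending at $w$. This needs the fact that $v_1$ is adjacent to every vertex of $U$, which comes from (ii). I would first try to prove it with a rotation-system argument at $v_1$. If that gets messy, the fallback is induction: remove a degree-$3$ vertex of the disk triangulation adjacent to both $v_1$ and $v_2$ and apply the induction hypothesis.
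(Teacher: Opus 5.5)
Your proof is correct and follows the paper's route. The vertex $w$ splits the exterior of $v_1v_2v_3$ into three triangular regions, condition (ii) makes every interior vertex adjacent to both apices of its region, and consecutive neighbours of $v_1$ inside a region are joined, which produces the path ending at $w$; the paper gets these path edges from the empty quadrilaterals $v_1a_iv_2a_{i+1}v_1$. Your Euler count ($3k$ edges in the disk triangulation, $2k+1$ of them at $v_1$ or $v_2$) gives an explicit certificate that $U\cup\{w\}$ carries no extra chords, a point the paper leaves implicit. The step you flag as the main obstacle is already settled by your own argument: $wv_1v_2$ is a face of the disk triangulation, so $w$ is adjacent to $v_2$ in the rotation at $v_1$ and is therefore an endpoint of the path, and no induction fallback is needed.
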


\begin{proof}
The necessity follows immediately from the definition of $\mathcal{G}$, so we prove the sufficiency.
Clearly, $G[N[v]]=K_4$.
By~\eqref{i}, we have $G[N[v]\cup\{w\}]=H_5$.
\begin{figure}
  \centering
  \includegraphics[width=4cm]{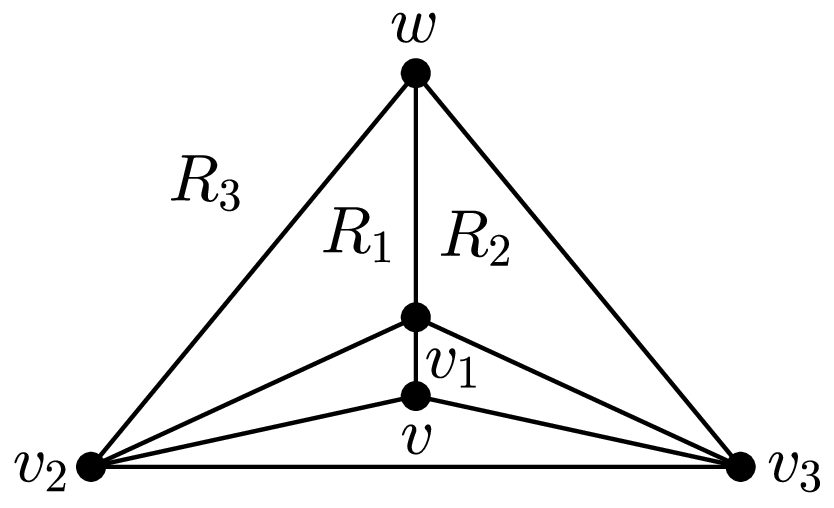}
  \caption{The local structure in a planar embedding of $G$.}\label{Fig:Ri}
\end{figure}
Embed $G$ in the plane so that $v$ lies inside the triangle $wv_2v_3$ (see Figure~\ref{Fig:Ri}), and let $R_1, R_2,$ and $R_3$ be the regions bounded by the triangles $wv_1v_2$, $wv_1v_3$, and $wv_2v_3$, respectively, none of which contains $v$.
Denote by ${\rm Int}(R_i)$ the set of vertices strictly inside $R_i$.

If ${\rm Int}(R_1)=\emptyset$, then $G[\{v_1,v_2,w\}\cup{\rm Int}(R_1)]=G[\{v_1,v_2,w\}]=H_3$.
Otherwise, by~\eqref{ii}, every vertex $a\in{\rm Int}(R_1)$ is adjacent to both $v_1$ and $v_2$, and hence ${\rm Int}(R_1)\subseteq N(v_1)$.
Let $N(v_1)\cap{\rm Int}(R_1)=\{a_1,\dots,a_{r-1}\}$ with $r\ge2$, listed in clockwise order, and set $a_r=w$.
Then, for $1\le i\le r-1$, we have $a_iv_1,a_iv_2\in E(G)$ and no vertex lies inside the cycle $v_1a_iv_2a_{i+1}v_1$, which implies $a_ia_{i+1}\in E(G)$.
Thus, $G[\{v_1,v_2,w\}\cup{\rm Int}(R_1)]=H_{r+2}$.

Similarly, for some integers $s,t\ge1$, we have $G[\{v_1,v_3,w\}\cup{\rm Int}(R_2)]=H_{s+2}$ and $G[\{v_2,v_3,w\}\cup{\rm Int}(R_3)]=H_{t+2}$.
Therefore, $G=G_{r,s,t}\in\mathcal{G}$.
\end{proof}

A vertex $v\in V(G)$ is called a \textit{central vertex} of $G$ if $d_G(v)=3$ and $v$ satisfies the two conditions {\rm(i)} and {\rm(ii)} of Lemma~\ref{lem:iff}. 
Let $C(G)$ be the set of all central vertices of $G$. For any graph $H$, let $V_3(H)=\{x\in V(H): d_H(x)=3\}$. By definition, $C(G)\subseteq V_3(G)$.

\begin{lem}\label{lem:center}
Let $G$ be a graph with $n\ge 6$ vertices. Then
$G\in\mathcal{G}$ and $G-x\in \mathcal{H}^+$ for some $x\in C(G)$ if and only if $G\in\mathcal{H}^+$.
\end{lem}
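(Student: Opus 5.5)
The plan is to prove both directions by direct structural comparison, using the explicit description of $G_{r,s,t}$ from Lemma~\ref{lem:iff} together with the degree sequences of the members of $\mathcal{H}^+$.

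\textbf{Sufficiency.} Suppose $G\in\mathcal{H}^+$. If $G=H_n$ with path $a_1a_2\dots a_{n-2}$ and adjacent hubs $z_1,z_2$, take $x=a_1$. Then $d(a_1)=3$ and $N(a_1)=\{a_2,z_1,z_2\}$. The vertex $a_3$ is the unique vertex outside $N[a_1]$ adjacent to all three of these, and every $a_i$ with $i\ge4$ meets $N(a_1)$ exactly in $\{z_1,z_2\}$. So $a_1\in C(G)$, hence $G\in\mathcal{G}$ by Lemma~\ref{lem:iff}, and $G-a_1=H_{n-1}\in\mathcal{H}^+$ because $n\ge6$. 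For $G\in\{J_1,J_2\}$ I would read off from Figure~\ref{Fig:Fn} a degree-$3$ vertex $x$, check conditions (i) and (ii) by hand, and verify that $G-x$ is an $H_{n-1}$ (or $J_1$). This is a finite check.

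\textbf{Necessity.} Let $G=G_{r,s,t}$ with $x=v\in C(G)$ and $n=r+s+t+2$, so $G-v$ has $m=r+s+t+1\ge5$ vertices. I would compute the degrees in $G-v$ from the construction:
\begin{itemize}
\item $d(w)=r+s+t$;
\item $d(v_1)=r+s+1$, $d(v_2)=r+t+1$, $d(v_3)=s+t+1$;
\item every path end ($a_1$, $b_1$, $c_1$, when they differ from $w$) has degree $3$;
\item every other path vertex has degree $4$.
\end{itemize}
If $G-v=H_m$, there must be exactly two adjacent vertices of degree $m-1=r+s+t$. The vertex $w$ has this degree. Among $v_1,v_2,v_3$, a degree of $m-1$ forces the missing parameter to equal $1$; for example $d(v_1)=m-1$ if and only if $t=1$. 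Other vertices have degree at most $4<m-1$ unless $m$ is small. So, after relabelling, $t=1$, i.e.\ $\mathrm{Int}(R_3)=\emptyset$ and $wv_2v_3$ is a face.

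In that case I would exhibit $G$ directly as $H_n$. The hubs are $w$ and $v_1$, and the spanning path is
\[
a_1\dots a_{r-1}\,v_2\,v\,v_3\,b_{s-1}\dots b_1 .
\]
Each vertex of this path is adjacent to both $w$ and $v_1$: the vertex $v$ is adjacent to $v_1$ and to no $w$, so I must re-orient. Concretely, the hubs should be $v_1$ and whichever of $w$, $v_2$, $v_3$ the count forces, and I would check the adjacencies against Figure~\ref{Fig:G_rst}. Either way, $G\cong H_n$.

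\textbf{Small cases.} If $G-v\in\{J_1,J_2\}$, then $n$ is bounded: $|J_i|$ is small, presumably $6$ or $7$. Only finitely many triples $(r,s,t)$ have $r+s+t+1=|J_i|$, and for each I would compare $G_{r,s,t}-v$ with $J_i$ (degree sequences suffice to rule out most) and check that the survivors give $G\in\mathcal{H}^+$. The same finite check handles small $m$, where path vertices of degree $4$ might coincide with the hub degree $m-1$.

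\textbf{Main obstacle.} I expect the main difficulty to be this bookkeeping at small $n$ together with the exceptional graphs $J_1,J_2$: the degree-counting argument is clean only once $m-1>4$. The first attempt there will be a short case table indexed by $(r,s,t)$.
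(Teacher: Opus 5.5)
\textbf{There is a gap: the necessity direction fails at the degree computation.} In $G_{r,s,t}$ the vertex $w$ is identified with $a_r$, $b_s$ and $c_t$. That makes it an \emph{end} of each of the three paths, not a hub. Its neighbours are $v_1,v_2,v_3$ together with whichever of $a_{r-1},b_{s-1},c_{t-1}$ exist. So $d_{G-v}(w)$ is $3$ plus the number of parameters among $r,s,t$ that are at least $2$. In particular $d_{G-v}(w)\le 6$, not $r+s+t$.

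Hence your step ``$w$ is one hub of $H_m$, so some $v_i$ is the other, so after relabelling $t=1$'' has no basis. The claim you then need, that $t=1$ already yields $G\cong H_n$, is false. For $r,s\ge 2$ the graph $G_{r,s,1}$ has exactly one vertex of degree $n-1$, namely $v_1$, since $d(v_2)=r+3$, $d(v_3)=s+3$ and $d(w)=5$. By contrast, $H_n$ with $n\ge 6$ has two such vertices. For example, $G_{2,3,1}$ has degree sequence $(3,3,3,4,5,5,6,7)$ and is not in $\mathcal{H}^+$ at all. This is why your spanning path with hubs $w,v_1$ collapses, and ``either way, $G\cong H_n$'' cannot be rescued from $t=1$ alone.

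\textbf{How to repair it.} With the correct degrees, the degree strategy works for large $m$. If $m\ge 8$, the two hubs of $H_m$ have degree $m-1\ge 7>6$, so they must be two of $v_1,v_2,v_3$. Then $d_{G-v}(v_i)=m-1$ for two indices forces two of $r,s,t$ to equal $1$, whence $G=G_{1,1,t}=H_n$ up to relabelling. You would still owe the finite check for small $m$ and for $G-v\in\{J_1,J_2\}$ (these have orders $7$ and $8$, not $6$ or $7$). So far you have only announced that check.

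Your reduction to $x=v$ is legitimate, because the proof of Lemma~\ref{lem:iff} builds the $G_{r,s,t}$ structure around an arbitrary prescribed central vertex. It deserves that sentence in the write-up.

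\textbf{Comparison with the paper.} The paper avoids this bookkeeping entirely. Ordering $r\le s\le t$, the assumption $G\notin\mathcal{H}^+$ forces $s\ge 2$ and $t\ge 3$. The degree-$3$ vertices of $G-v$ then lie among $a_1,b_1,c_1$. Since $|N(b_1)\cap N(c_1)|=|N(a_1)\cap N(c_1)|=1$, none of them is central, so $C(G-v)=\emptyset$. This contradicts $G-v\in\mathcal{H}^+\subseteq\mathcal{G}$. That single observation covers $J_1$, $J_2$ and all small cases at once.
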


\begin{proof}
We first prove the sufficiency.
Let $G \in \mathcal{H}^+$. Observe that
$J_1 = G_{1,2,2}$, $J_2 = G_{2,2,2}$, and $H_n = G_{1,1,n-4}$ for $n \ge 6$.
Thus $\mathcal{H}^+ \subseteq \mathcal{G}$.
Moreover, for each $H \in \mathcal{H}^+$, we have $V_3(H)=C(H)$, and for every $v\in C(H)$, $H-v\in\mathcal{H}^+$.
Hence the sufficiency follows.

We now prove the necessity.
Assume that $G=G_{r,s,t}$ for some $1\le r\le s\le t$, and $G-x\in \mathcal{H}^+$ for some $x\in C(G)$.
We use the vertex labels as in Figure~\ref{Fig:G_rst}. Suppose that $G\notin\mathcal{H}^+$.
Then $t\ge 3$.
Since $x\in C(G)$, for each $y\in V(G)\setminus N[x]$ we have
$|N_G(x)\cap N_G(y)|\ge 2$.
If $r=1$, then $t\ge s\ge 2$ and $V_3(G)=\{v,b_1,c_1\}$;
if $r\ge 2$, then $V_3(G)=\{v,a_1,b_1,c_1\}$.
In both cases,
$|N_G(b_1)\cap N_G(c_1)|=1$, and when $r\ge 2$ we also have
$|N_G(a_1)\cap N_G(c_1)|=1$.
Thus $\left(V_3(G)\setminus\{v\}\right)\cap C(G)=\emptyset$.
Since $C(G)\subseteq V_3(G)$, it follows that $x=v$.
Consequently,
$V_3(G-x)=V_3(G)\setminus\{x\}\subseteq\{a_1,b_1,c_1\}$,
and for each $y\in V_3(G-x)\setminus\{c_1\}$ we have
$|N_{G-x}(y)\cap N_{G-x}(c_1)|=1$.
Hence $C(G-x)=\emptyset$.
However, since $\mathcal{H}^+\subseteq\mathcal{G}$, we must have
$C(G-x)\neq\emptyset$, a contradiction.
Therefore, $G\in\mathcal{H}^+$.
\end{proof}

\begin{lem}\label{lem:delta3}
Let $G$ be a maximal planar graph with $n\ge6$ vertices, and let $v$ be a $3$-vertex of $G$. Then
$$\mathcal{N}_v(K_{1,3}^+,G)\le 8n-16,$$ 
with equality if and only if $G\in\mathcal{G}$ and $v\in C(G)$.
\end{lem}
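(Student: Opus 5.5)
We count the $K_{1,3}^+$-copies through $v$ by the role $v$ plays in the copy, bound each part with Lemma~\ref{lem:K4minor}, and bound the degree sum of $N(v)$ using planarity. Write $N(v)=\{v_1,v_2,v_3\}$. Since $G$ is a triangulation and $d(v)=3$, $G[N[v]]=K_4$.

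\textbf{Decomposition by role.} A copy of $K_{1,3}^+$ has a center of degree $3$, two triangle vertices of degree $2$, and a pendant vertex.
\begin{itemize}
\item If $v$ is the center, its three edges must all be used and exactly one edge of the triangle $v_1v_2v_3$ is added. This gives $3$ copies.
\item If $v$ is a triangle vertex, the triangle is $vv_iv_j$ and the center is $v_i$ or $v_j$. When $v_i$ is the center, the pendant vertex is any neighbour of $v_i$ other than $v,v_j$, giving $d(v_i)-2$ choices. Each $v_i$ lies in two such triangles through $v$, so this part contributes $\sum_i 2(d(v_i)-2)$.
\item If $v$ is the pendant vertex, the center is some $v_i$, and the triangle is an edge of $G[N(v_i)-v]$. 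This is the same count as in Lemma~\ref{lem:N_K13+}.
\end{itemize}
Altogether
\[
\mathcal{N}_v(K_{1,3}^+,G)=3+\sum_{i=1}^{3}\bigl(e(G[N(v_i)-v])+2d(v_i)-4\bigr).
\]

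\textbf{Bounding the edge terms.} Each graph $G[N(v_i)-v]$ is $K_4$-minor-free, since $v_i$ together with it would otherwise give a $K_5$-minor. Lemma~\ref{lem:K4minor} gives $e(G[N(v_i)-v])\le 2(d(v_i)-1)-3$. The same bound holds trivially when $d(v_i)-1\le 3$, because a graph on $m\le 3$ vertices has at most $2m-3$ edges for $m\in\{2,3\}$. Hence
\[
\mathcal{N}_v(K_{1,3}^+,G)\le 3+\sum_i(4d(v_i)-9)=4\sum_i d(v_i)-24.
\]
So it suffices to show $\sum_i d(v_i)\le 2n+2$.

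\textbf{Bounding the degree sum.} Count edges at $N(v)$: $3$ go to $v$, $6$ are the ordered incidences inside the triangle $v_1v_2v_3$, and the rest go to the $n-4$ vertices $u\notin N[v]$. This gives
\[
\sum_i d(v_i)=9+\sum_{u\notin N[v]}|N(u)\cap N(v)|.
\]
Each vertex $u$ with $|N(u)\cap N(v)|\ge 3$ is adjacent to all of $N(v)$. Two such vertices $w,w'$, together with $v$, would form a $K_{3,3}$ with $N(v)$, contradicting planarity. So at most one $u$ has three neighbours in $N(v)$.

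The key remaining step, and the one I expect to be the main obstacle, is that every other $u\notin N[v]$ has $|N(u)\cap N(v)|\le 2$. This is immediate, since the only value above $2$ is $3$, which is already excluded. Therefore
\[
\sum_i d(v_i)\le 9+3+2(n-5)=2n+2,
\]
and hence $\mathcal{N}_v(K_{1,3}^+,G)\le 8n-16$.

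\textbf{Equality.} Equality forces the counting bound to be tight, which means exactly one vertex $w$ has three neighbours in $N(v)$ and every other outside vertex has exactly two. These are precisely conditions (i) and (ii), so $v\in C(G)$, and Lemma~\ref{lem:iff} gives $G\in\mathcal{G}$.

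For the converse, let $G=G_{r,s,t}$ with $v$ the central vertex. The degree count is then automatically tight. We must also check that each $G[N(v_i)-v]$ attains $2(d(v_i)-1)-3$ edges. In $G_{r,s,t}$, for instance, $N(v_1)-v$ consists of the path $v_2,a_1,\dots,a_{r-1},w,b_{s-1},\dots,b_1,v_3$. On top of this path, $v_2$ is adjacent to all $a_j$ and to $w$, $v_3$ is adjacent to all $b_j$ and to $w$, and $v_2v_3\in E(G)$. Counting these edges gives exactly $2|N(v_1)-v|-3$, and the same check works for $v_2$ and $v_3$.
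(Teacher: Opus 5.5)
Your proof is correct. It follows the paper's decomposition by the role of $v$, getting the same counts $3$ and $2\sum_i d(v_i)-12$. It differs from the paper in how you bound the copies where $v$ is the pendant vertex.

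\textbf{The paper's route.} It splits the pendant copies by how many of their vertices lie outside $N[v]$. Three copies lie entirely in $N[v]$. Copies with one outside vertex $u$ are counted per $u$: $6$, $2$ or $0$ according to $|N(u)\cap N(v)|$. Copies with two outside vertices are counted per edge of $G-N[v]$, at most $2$ each by the same $K_{3,3}$ argument you use. Since $|E(G-N[v])|=3n-3-\sum_i d(v_i)$, the degree sum cancels exactly against $2\sum_i d(v_i)-12$. So the paper never needs a bound on $\sum_i d(v_i)$. All the slack sits in $s_1^1$, whose tightness is literally conditions (i) and (ii), and in condition $(*)$.

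\textbf{Your route.} You bound each $e(G[N(v_i)-v])$ by Lemma~\ref{lem:K4minor}, as in Lemma~\ref{lem:K13+,delta4}. This leaves the degree-dependent bound $4\sum_i d(v_i)-24$. You then prove $\sum_i d(v_i)\le 2n+2$ directly, which is the special case of Alon--Caro's Lemma~\ref{lem:d1d2d3} for the neighbours of a $3$-vertex.

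\textbf{What each buys.}
\begin{itemize}
\item Your approach makes the forward direction of the equality case immediate, since tightness of the degree sum is precisely (i) and (ii).
\item The price is an explicit structural check for the converse: every $G[N(v_i)-v]$ in $G_{r,s,t}$ must attain $2|N(v_i)-v|-3$ edges. You carry this check out correctly.
\item The paper needs the analogous check that $(*)$ holds in $G_{r,s,t}$, but only asserts that it holds automatically.
\end{itemize}

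\textbf{One small point.} In the converse you should say why you may take $v$ to be the distinguished vertex of $G_{r,s,t}$. The sufficiency proof of Lemma~\ref{lem:iff} builds the $G_{r,s,t}$ structure around any vertex satisfying (i) and (ii), so this is justified, but it deserves a sentence.
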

\begin{proof}
Let $N(v)=\{v_1,v_2,v_3\}$. Since $G$ is a maximal planar graph, $G[N(v)]$ forms a triangle $v_1v_2v_3v_1$. 
We partition all $K_{1,3}^+$-subgraphs of $G$ containing $v$ into three subsets $\mathcal{S}_1, \mathcal{S}_2$ and $\mathcal{S}_3$ such that for any $i\in \{1,2,3\}$, $d_H(v)=i$ for all $H\in \mathcal{S}_i$. Let $s_i=|\mathcal{S}_i|(i=1,2,3)$. Then $$\mathcal{N}_v(K_{1,3}^+,G)=s_1+s_2+s_3.$$

If $H\in\mathcal{S}_3$, then $V(H)=N[v]$. So $s_3=3$. Suppose that $H\in\mathcal{S}_2$ and $vv_1$, $vv_2\in E(H)$. Then $v_1v_2\in E(H)$. If $d_H(v_1)=3$, then there is a vertex $u\in N_G(v_1)\setminus \{v, v_2\}$ such that $\{uv_1\}=E(H)\setminus \{vv_1, vv_2, v_1v_2\}$. Thus, the number of subgraphs in $\mathcal{S}_2$ containing the edge $v_1v_2$ is $(d(v_1)-2)+(d(v_2)-2)$. Summing over all three edges $v_1v_2,v_2v_3$ and $v_3v_1$, we obtain $$s_2=(d(v_1)+d(v_2)-4)+(d(v_2)+d(v_3)-4)+(d(v_3)+d(v_1)-4)=2\sum\limits_{i=1}^3d(v_i)-12.$$

Now we begin to consider $\mathcal{S}_1$. Then for any $H\in\mathcal{S}_1$, $|V(H)\setminus N[v]| \le 2$. Thus we continually partition $\mathcal{S}_1$ into three subsets: $\mathcal{S}_1^i =\{H\in\mathcal{S}_1:\; |V(H)\setminus N[v]| =i\}(0\le i \le 2)$. Let $s_1^i=|\mathcal{S}_1^i|$ for any $i\in\{0,1,2\}$. Then
$$\mathcal{S}_1=\mathcal{S}_1^0\cup \mathcal{S}_1^1\cup \mathcal{S}_1^2 \text{ and } s_1=s_1^0 + s_1^1 + s_1^2.$$

Clearly, $s_1^0=3$. For any $H\in \mathcal{S}_1^1$, there is exactly one vertex in $V(G)\setminus N[v]$. Thus for any $w \in V(G)\setminus N[v]$, let $s_1^1(w)$ be the number of subgraphs of $G$ in $\mathcal{S}_1^1$ containing $w$. By the definition of $\mathcal{S}_1^1$, $s_1^1(w)=0$ if $|N(w)\cap N(v)|\le 1$. If $|N(w)\cap N(v)|=2$, then $s_1^1(w)=2$. If $|N(w)\cap \{v_1,v_2,v_3\}|=3$, then $s_1^1(w)=6$ (see Figure \ref{Fig:6K13}). By the planarity of $G$, at most one vertex can be adjacent to all three neighbors of $v$. Since $|V(G)\setminus N[v]|=n-4$, 
\begin{equation*}\label{equ:s11}
 s_1^1\le6+2(n-4-1)=2n-4,
\end{equation*}
with equality if and only if there is only one vertex $w\in V(G)\setminus N[v]$ that satisfies $|N(w)\cap N(v)|=3$ and $|N(x)\cap N(v)|=2$ for any $x\in V(G)\setminus (N[v]\cup \{w\})$. 
By Lemma \ref{lem:iff}, $s_1^1 = 2n-4$ if and only if $G\in \mathcal{G}$ and $v\in C(G)$. 

\begin{figure}
 \centering
 \includegraphics[width=17cm]{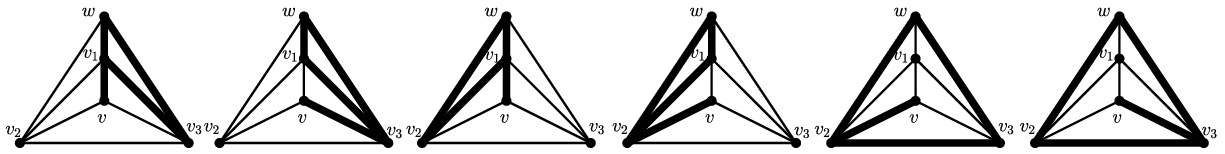}
 \caption{Six subgraphs containing $w$ with exactly one vertex outside $N[v]$.}\label{Fig:6K13}
\end{figure}

For any $H\in \mathcal{S}_1^2$, $|E(H)\cap E(G-N[v])| = 1$. For any $xy\in E(G-N[v])$, $|N(x)\cap N(y) \cap N(v)|\le 2$. Since $|E(G-N[v])| = 3n - 6 - (\sum_{i=1}^3d(v_i)-3)$, 
\begin{equation*}\label{equ:s12}
 s_1^2\le 2\left(3n-3-\sum\limits_{i=1}^3d(v_i)\right),
\end{equation*}
with equality if and only if the following condition holds:
\begin{gather*}
\tag{$*$}
|N(a)\cap N(b) \cap N(v)|=2 \quad \text{for all } ab\in E(G-N[v]).
\end{gather*}

Finally, we have
\begin{align*}
 \mathcal{N}_v(K_{1,3}^+,G) &=(s_1^0+s_1^1+s_1^2)+s_2+s_3 \\
 &\le\left(3+(2n-4)+2\left(3n-3-\sum\limits_{i=1}^3d(v_i)\right)\right)+\left(2\sum\limits_{i=1}^3d(v_i)-12\right)+3\\
 &=8n-16,
\end{align*}
with equality if and only if $G \in \mathcal{G}$ and $v \in C(G)$ ($(*)$ holds automatically).
Thus, Lemma \ref{lem:delta3} holds.
 \end{proof}
 
Now, we begin to prove Theorem {\rm\ref{thm:K13+}}.
\begin{proof}[Proof of Theorem {\rm\ref{thm:K13+}}]
Let $G$ be an $n (n\ge4)$-vertex planar graph and $\mathcal{N}(K_{1,3}^+, G) = f(n,K_{1,3}^+)$. 
If $n\in\{4,5\}$, it follows from Lemma \ref{lem:maximal} that $\mathbb{F}(n,K_{1,3}^+)=\{H_n\}$. 
By Lemma \ref{lem:N_K13+}, 
$$f(4,K_{1,3}^+)=\mathcal{N}(K_{1,3}^+, H_4)=12=4n^2-12n-4 \text{ and } f(5,K_{1,3}^+)=\mathcal{N}(K_{1,3}^+, H_5)=36=4n^2-12n-4.$$

Assume $n\ge6$. By Lemma \ref{lem:maximal}, $G$ is a triangulation and $\delta(G)\ge3$. 
By Lemma \ref{lem:K13+,delta4}, $\mathcal{N}(K_{1,3}^+,G) < 4n^2-12n-4$ when $\delta(G)\ge4$. 
Assume $\delta(G)=3$ and let $v$ be a 3-vertex of $G$. By induction and Lemma \ref{lem:delta3}, $$\mathcal{N}(K_{1,3}^+,G)=\mathcal{N}(K_{1,3}^+,G-v)+\mathcal{N}_v(K_{1,3}^+,G)\le 4(n-1)^2-12(n-1)-4+(8n-16)=4n^2-12n-4,$$ 
with equality if and only if $G-v\in \mathcal{H}^+$ and $G\in\mathcal{G}$ with $v\in C(G)$.  
Thus, $f(n,K_{1,3}^+)=4n^2-12n-4$ and $\mathbb{F}(K_{1,3}^+)=\mathcal{H}^+$ by Lemma \ref{lem:center}. 
\end{proof}

\section{Proof of Theorem \ref{thm:K4-}}

\begin{proof}[Proof of Theorem {\rm \ref{thm:K4-}}]
We proceed by induction on the number of vertices. Let $G$ be an $n$-vertex planar graph with $\mathcal{N}(K_4^-,G)=f(n,K_4^-)$.
By Lemma \ref{lem:maximal}, $G$ is a triangulation and $3\le \delta(G)\le5$. If $n\in\{4,5\}$, by Lemma \ref{lem:maximal}, $\mathbb{F}(n,K_4^-)=\{H_n\}$ and a direct count gives that 
$$f(n,K_4^-)=\mathcal{N}(K_4^-,H_n)=\frac{1}{2}\left(n^2+9n-40\right). $$

In the following, we assume $n\ge6$. Suppose that $\delta(G)\ge4$. Let $x\in V(G)$. If $G$ has a $K_4^-$-copy $H$ such that $d_H(x)=3$, then $H\cap G[N(x)] = P_3$. Thus we count the number, denoted by $\mathcal{N}_x(K_4^-,G,3)$, of $K_4^-$-copies containing $x$ such that $x$ has degree three in the subgraph. Note that $G[N(x)]$ is an outerplanar graph. By Lemma \ref{lem:exop_P3} and Lemma \ref{lem:delta4_d2x}, we have
\begin{align*}
 \mathcal{N}(K_4^-,G) & = \frac{1}{2}\sum\limits_{x\in V(G)}\mathcal{N}_x(K_4^-,G,3)\\
 & \le \frac{1}{2}\sum\limits_{x\in V(G)} \frac{d^2(x)+3d(x)-12}{2}\\
 & = \frac{1}{4}\left(\sum\limits_{x\in V(G)} d^2(x)+ 3\sum\limits_{x\in V(G)}d(x)-12n\right)\\
 & \le \frac{1}{4}\left((2n^2+8n-24)+3(6n-12)-12n\right)\\
 & = \frac{1}{2}\left(n^2+7n-30\right)<\frac{1}{2}\left(n^2+9n-40\right).
\end{align*}

Now assume that $\delta(G)=3$. Let $v$ be a $3$-vertex of $G$ and $N(v)=\{v_1,v_2,v_3\}$, where $v_1v_2,v_2v_3,v_3v_1\in E(G)$. 
We now partition the set $\mathcal{S}$ of $K_4^-$-subgraphs containing $v$ into two disjoint classes: $S_i =\{ H\in \mathcal{S}$: $|V(H)\setminus N[v]|= i\}$ for each $i\in \{0, 1\}$. Let $s_i=|\mathcal{S}_i|$ for $i\in\{0,1\}$. Since $G[N[v]] = K_4$, $s_0 = \mathcal{N}(K_4^-,G[N[v]]) = \mathcal{N}(K_4^-,K_4) = 6$. 
For each $H\in\mathcal{S}_1$, let $\{x, y\}= V(H)\cap N(v)$. Then $xy\in \{v_1v_2,v_2v_3,v_3v_1\}$. 
Define $A_i = N(v_i)\cap N(v_{i+1})\setminus N[v]$ for $i\in\{1,2,3\}$ where $v_4=v_1$. Since $G$ is planar, $|A_i\cap A_j|\le1$ for $i\neq j$. Thus, 
\begin{align*}\label{equ:K4-:s2}
s_1&= \sum\limits_{i=1}^{3} |(N(v_i)\cap N(v_{i+1}))\setminus N[v]|= \sum\limits_{i=1}^{3}|A_i|\le|A_1\cup A_2\cup A_3|+2\le |V(G)\setminus N[v]|+2=n-2,
\end{align*}
with equality if and only if $|N(w)\cap N(v)|=3$ for some vertex $w\in V(G)\setminus N[v]$ and $|N(w')\cap N(v)|=2$ for each $w'\in V(G)\setminus N[v]-w$. By Lemma \ref{lem:iff}, $s_1=n-2$ if and only if $G\in\mathcal{G}$ and $v\in C(G)$. 
So 
$$\mathcal{N}_v(K_4^-,G)=s_0+s_1 \le 6+(n-2) = n+4.$$ 
By the induction hypothesis, we have
$$\mathcal{N}(K_4^-,G)=\mathcal{N}(K_4^-,G-v)+\mathcal{N}_v(K_4^-,G)\le \frac{1}{2}\left((n-1)^2+9(n-1)-40\right) + (n+4)= \frac{1}{2}\left(n^2+9n-40\right),$$
with equality if and only if $G-v\in\mathcal{H}^+$, $G\in\mathcal{G}$ and $v\in C(G)$. 
Thus, $f(n,K_4^-)=\frac{1}{2}\left(n^2+9n-40\right)$ and $\mathbb{F}(K_4^-)=\mathcal{H}^+$ by Lemma \ref{lem:center}. 
\end{proof}

\section{\texorpdfstring{The graphs in $\mathbb{F}(P_3)$ and $\mathbb{F}(K_4)$}{The graphs in F(P3) and F(K4)}}
\noindent

\begin{lem}[{\rm\cite{Alon1984Caro}}]\label{lem:d1d2d3}
 If $d_1,d_2$ and $d_3$ are the degrees of three different vertices of a planar graph with $n$ vertices, then $$d_1+d_2+d_3\le 2n+2.$$
\end{lem}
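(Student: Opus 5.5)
Assume first that the planar graph $G$ is a triangulation with $n\ge 4$ vertices and $3n-6$ edges; the general case will follow afterwards. Let $x_1,x_2,x_3$ be the three vertices, with degrees $d_1,d_2,d_3$. The plan is to count incidences between these vertices and the remaining $n-3$ vertices, which is the approach of Alon and Caro.

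Let $S=\{x_1,x_2,x_3\}$ and $T=V(G)\setminus S$. Then
$$d_1+d_2+d_3=2e(G[S])+e(S,T).$$
To bound $e(S,T)$, consider the bipartite subgraph $B$ formed by the edges between $S$ and $T$. For each $t\in T$, let $m_t=|N(t)\cap S|\le 3$. Planarity gives two facts:
\begin{itemize}
\item by Euler's formula for bipartite planar graphs, $e(B)\le 2(n)-4$;
\item no two vertices of $T$ can both be adjacent to all of $S$ when $e(G[S])$ is large, because that creates a $K_{3,3}$ minor.
\end{itemize}

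The cleanest route is as follows. At most two vertices $t\in T$ satisfy $m_t=3$, since three such vertices together with $S$ give $K_{3,3}$. Hence
$$e(S,T)\le 2(n-3)+2=2n-4.$$
If two vertices $t,t'$ have $m_t=m_{t'}=3$, then $G[S\cup\{t,t'\}]\supseteq K_{2,3}$, and adding any edge inside $S$ still leaves a planar graph. So I need a joint bound rather than separate ones.

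The trade-off is handled by a single Euler-type bound. Contract nothing; instead, consider the subgraph $G'$ consisting of the edges of $G[S]$ together with $B$. This is planar on $n$ vertices. Let $k$ be the number of vertices of $T$ with $m_t=3$.
\begin{itemize}
\item If $k=2$, then $G[S\cup\{t,t'\}]$ is $K_{2,3}$ plus edges inside $S$. Any two edges inside $S$, together with $t$ and $t'$, form a $K_5^-$. I need to check that $K_{2,3}$ plus two edges among the three-vertex side is nonplanar. That graph has 5 vertices and 8 edges, which is below $3\cdot 5-6=9$, so edge counting alone does not rule it out. Checking it directly: it is $K_5$ minus the edges $tt'$ and one edge of $S$, which is planar. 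So edge counting does not settle the case and direct counting is required.
\end{itemize}

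Directly: $d_1+d_2+d_3=2e(G[S])+\sum_t m_t$. Every face of the planar graph $G'$ restricted to $S\cup\{t\}$ gives the constraint. I would instead use this bound: the number of vertices $t$ with $m_t\ge 2$ adjacent to a given pair $x_i,x_j$ is unrestricted, but the total satisfies $\sum_t (m_t-2)\le 2-e(G[S])$, with the convention that negative parts are absorbed.

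\textbf{Main obstacle.} The main obstacle is establishing this last inequality, namely
$$\sum_t \max(m_t-2,0)+e(G[S])\le 4.$$
I would prove it by observing that $k\le 2$, and that when $k\ge 1$ each edge $x_ix_j$ combined with a vertex $t$ having $m_t=3$ yields a $K_4$. I would then argue via the faces around $t$, noting that $k=2$ together with three $S$-edges gives $K_5^-$ plus extra structure. This yields
$$d_1+d_2+d_3\le 2e(G[S])+2(n-3)+k\le 2n+2,$$
after checking the extreme cases $(e(G[S]),k)\in\{(3,1),(2,2)\}$ by hand.

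Finally, for the general case, any planar graph embeds in a triangulation, and adding edges only increases degrees, so the bound carries over.
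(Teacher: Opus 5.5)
Your proposal already contains a complete proof, but you then abandon it for a step that is false. You correctly observe that at most two vertices of $T$ are adjacent to all of $S$, since three would give a $K_{3,3}$. This gives $e(S,T)=\sum_{t\in T}m_t\le 2(n-3)+k\le 2n-4$, and the Euler bound for the bipartite planar graph $B$ gives the same. Combined with the trivial bound $e(G[S])\le 3$, you get $d_1+d_2+d_3=2e(G[S])+e(S,T)\le 6+(2n-4)=2n+2$.

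No joint bound is needed. The worst case $e(G[S])=3$, $k=2$ contributes exactly $6+2=8$, which is what the target $2n+2=2(n-3)+8$ allows. So your final display $2e(G[S])+2(n-3)+k\le 2n+2$ follows from $e(G[S])\le 3$ and $k\le 2$ separately. The paper gives no proof of this lemma, only the citation to Alon and Caro, and this short count is all that is required.

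The inequality you single out as the main obstacle, $\sum_t\max(m_t-2,0)+e(G[S])\le 4$ (equivalently $e(G[S])+k\le 4$), is false, so that part of your plan cannot be completed.
\begin{itemize}
\item In $K_5$ minus an edge, which is planar (it is $H_5$), let $S$ be the three vertices of degree $4$. Then $G[S]$ is a triangle and both remaining vertices are adjacent to all of $S$, so the left side equals $5$.
\item More generally, in $H_n$ with $n\ge 5$, let $S$ consist of the two apex vertices and an interior vertex of the path. Then $e(G[S])=3$ and $k=2$, since both path-neighbours of that interior vertex are adjacent to all of $S$. Here $d_1+d_2+d_3=2(n-1)+4=2n+2$.
\end{itemize}
So the configuration $(e(G[S]),k)=(3,2)$, which your list of extreme cases $\{(3,1),(2,2)\}$ omits, is exactly the equality case of the lemma. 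It is also the configuration attaining equality in the proof of Theorem~\ref{thm:P3}. Your inequality would force $2e(G[S])+k\le 7$, hence the bound $2n+1$, which these examples refute.

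For the same reason, your bullet claiming that two vertices of $T$ cannot both be adjacent to all of $S$ when $e(G[S])$ is large is wrong. Drop the ``joint bound'' discussion and the main-obstacle step, and your argument is correct. The reduction to triangulations is harmless but unnecessary.
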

\begin{thm}\label{thm:P3}
$\mathbb{F}(P_3)=\mathcal{H}^+\cup\{H_3\}$.
\end{thm}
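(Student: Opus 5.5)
We need to show that the $n$-vertex planar graphs attaining $f(n,P_3)=2\binom{n-1}{2}+6n-18$ (for $n\ge 4$, together with $H_3$ as the trivial case $n=3$) are exactly the graphs in $\mathcal{H}^+\cup\{H_3\}$.

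First observe that $\mathcal{N}(P_3,G)=\sum_{x}\binom{d(x)}{2}$. By Lemma \ref{lem:maximal}, every extremal $G$ with $n\ge4$ is a triangulation, so $\sum_x d(x)=6n-12$. Maximizing $\mathcal{N}(P_3,G)$ is therefore equivalent to maximizing $\sum_x d^2(x)$ over triangulations. For $n=3$ the only candidates are $K_3=H_3$ and its subgraphs, so $H_3$ is the unique extremal graph.

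For the inclusion $\mathcal{H}^+\subseteq\mathbb{F}(P_3)$, I will compute directly. In $H_n$ the degree sequence is $(n-1,n-1,4,\dots,4,3,3)$ with $n-4$ fours. In $J_1$ and $J_2$ the sequences are read off from the figures; for instance, $J_2=G_{2,2,2}$ has $8$ vertices, and I will check that the three high-degree vertices have degree sum $2n+2$. In each case I verify that $\sum\binom{d}{2}$ equals $2\binom{n-1}{2}+6n-18$.

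For the reverse inclusion, let $G$ be an extremal triangulation with sorted degrees $d_1\ge d_2\ge\dots\ge d_n$. The plan is a majorization argument, in the spirit of Lemma \ref{lem:maximize}, using three constraints:
\begin{itemize}
\item $d_i\ge 3$ for all $i$;
\item $d_1+d_2+d_3\le 2n+2$, by Lemma \ref{lem:d1d2d3};
\item $d_1,d_2\le n-1$.
\end{itemize}
Subject to these and the sum $6n-12$, I will show by the usual shifting argument that $\sum d_i^2$ is maximized only by a few sequences. Moving a unit from a smaller entry to a larger one increases the sum of squares whenever the constraints permit. This forces the small entries to sit at $3$ or $4$ and the top three entries to sum to $2n+2$.

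The candidate maximizers will be $(n-1,n-1,4,4,\dots,4,3,3)$ together with the sporadic sequences realized by $J_1$ and $J_2$ for $n=7,8$. For the sporadic sequences I expect to use the tie cases: when $(n-1,n-1,4)$ and a more balanced top triple give the same value of $\sum d^2$, and this happens only for small $n$.

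The remaining step is structural. If $d_1=d_2=n-1$, then the two dominating vertices $x,y$ are adjacent. Every other vertex lies in $N(x)\cap N(y)$, so $G-\{x,y\}$ is a graph whose union with $x,y$ is planar, which forces it to be a linear forest. Triangulation then forces it to be a path, giving $G=H_n$.

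For the sporadic degree sequences, the small-$n$ analysis must show that a triangulation with that degree sequence is isomorphic to $J_1$ or $J_2$. I would reuse the $G_{r,s,t}$ description of Lemma \ref{lem:iff} to do this. Alternatively, one can check that the sequence forces three vertices of degree summing to $2n+2$, which, by the equality case of Lemma \ref{lem:d1d2d3}, makes every other vertex adjacent to at least two of them, and then identify the structure.

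I expect the main obstacle to be pinning down exactly which degree sequences tie with $H_n$ for each $n$. Since $J_1$ and $J_2$ occur only at $n=7,8$, I would first work out the exact optimization explicitly for $n\le 8$. For $n\ge 9$ I would show strict inequality unless $d_1=d_2=n-1$.
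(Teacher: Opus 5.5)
Your route is correct in outline, but it differs genuinely from the paper's.

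\textbf{The paper's route.} The paper inducts on $n$. Lemma~\ref{lem:delta4_d2x} disposes of the case $\delta(G)\ge4$. Otherwise the paper deletes a $3$-vertex $v$ and bounds $\mathcal{N}_v(P_3,G)=3+\sum_i(d(u_i)-1)\le 2n+2$, applying Lemma~\ref{lem:d1d2d3} only to the three neighbours of $v$. It then identifies the extremal graphs case by case for $n=6,7,8,9$ and $n\ge10$, by checking which faces of the previously found extremal graphs have degree sum $2n-1$.

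\textbf{Your route.} You solve one global degree-sequence optimization and then recognize the graphs from their degree sequences. This buys three things. It re-derives $f(n,P_3)$ directly. It makes $J_1$ and $J_2$ appear transparently as boundary ties of the optimization. And it makes the structural step easy:
\begin{enumerate}[(i)]
\item $d_1=d_2=n-1$ gives $H_n$, exactly as you say.
\item For the two sporadic sequences, the $3$-vertices are pairwise non-adjacent by $3$-connectivity. Deleting them leaves $K_4$, with the $3$-vertices stacked into three (resp. all four) distinct faces; two in one face would create $K_{3,3}$. These graphs are $J_1$ and $J_2$.
\end{enumerate}
The price is that the optimization is not the ``usual shifting argument'' of Lemma~\ref{lem:maximize}.

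\textbf{The gap in your sketch.} Your claim that shifting ``forces the small entries to sit at $3$ or $4$'' is false: $J_1$ and $J_2$ have $d_4=5$ and $d_4=6$. The entries $d_3,\dots,d_n$ are capped by $k:=d_3$, not by $4$, so you must optimize over $k$. Note that $k=3$ is infeasible for $n\ge5$.

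\textbf{How to close it.} Write $d_i=3+e_i$ for $i\ge3$, with $0\le e_i\le k-3$, and set $E=\sum_{i\ge3}e_i$. Then $\sum_{i\ge3}d_i^2\le 9(n-2)+(k+3)E$. Shifting a unit from the top pair to the rest costs at least $2k+1$ on $d_1^2+d_2^2$ and gains exactly $k+3$ in this bound. So you may take $d_1+d_2=2n+2-k$, which gives $E=n-8+k$.
\begin{enumerate}[(i)]
\item For $4\le k\le\frac{n+3}{2}$, the top pair is $(n-1,n+3-k)$, and one gets $\sum d_i^2\le 2n^2+12n-44+(k-4)(2k-3-n)$.
\item For $\frac{n+3}{2}<k\le\frac{2n+2}{3}$, the top pair is $(2n+2-2k,k)$, and one gets $\sum d_i^2\le 2n^2+12n-44+(2k-n-3)(3k-2n-2)$.
\end{enumerate}
Equality requires $k=4$, or $k$ equal to an endpoint $\frac{n+3}{2}$ or $\frac{2n+2}{3}$ together with $(k-3)\mid E$. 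The divisibility condition leaves only $(n,k)=(7,5)$ and $(8,6)$, which are exactly the degree sequences of $J_1$ and $J_2$.

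With this two-regime computation replacing the majorization sketch, your plan closes.
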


\begin{proof}
Let $G$ be an $n (n\ge3)$-vertex planar graph and $\mathcal{N}(P_3,G) = f(n,P_3)$. 
By Lemma \ref{lem:maximal}, $G$ is a triangulation.
For $3\le n\le 5$, it is clear that $G=H_n$. 
Now consider $n\ge6$. 
Then $$\mathcal{N}(P_3,G) = \mathcal{N}(K_{1,2},G)=\sum\limits_{x\in V(G)}\binom{d(x)}{2} = \frac{1}{2}\sum\limits_{x\in V(G)}\left(d^2(x)-d(x)\right) = \frac{1}{2}\sum\limits_{x\in V(G)}^nd^2(x)-e(G).$$
If $\delta(G)\ge4$, by Lemma \ref{lem:delta4_d2x}, $\mathcal{N}(P_3,G)\le \frac{1}{2}(2n^2+8n-24) - (3n-6) = n^2+n-6<f(n,P_3)=n^2+3n-16$. Now, assume $\delta(G)=3$. Let $v\in V(G)$ with $d_G(v)=3$ and $N_G(v)=\{u_1,u_2,u_3\}$. Denote $G'=G-v$. Then $$\mathcal{N}(P_3,G)=\mathcal{N}(P_3,G')+\mathcal{N}_v(P_3,G).$$ By induction, $\mathcal{N}(P_3,G')\le f(n-1,P_3)=(n-1)^2+3(n-1)-16=n^2+n-18$. Moreover, by Lemma \ref{lem:d1d2d3}, $$\mathcal{N}_v(P_3,G) = \binom{3}{2}+\sum_{i=1}^3\binom{d_G(u_i)-1}{1}=3+\left(\sum_{i=1}^3d_G(u_i)-3\right)\le3+(2n+2)-3=2n+2.$$ So $$\mathcal{N}(P_3,G)\le n^2+n-18+(2n+2)=n^2+3n-16=f(n,P_3),$$ with equality if and only if $G'\in\mathbb{F}(P_3)$ and $\sum_{i=1}^3d_G(u_i)=2n+2$, i.e. $\sum_{i=1}^3d_{G'}(u_i)=2n-1$. 

If $n=6$, then $G'=H_5$ and $\sum_{i=1}^3d_{G'}(u_i)=11$. Since $d_{G'}(u_i)\le n-2=4$ for $i\in\{1,2,3\}$, the degrees of vertices $u_1,u_2,u_3$ in $G'$ must be $3,4,4$. So vertex $v$ can be placed into any face of $f_i(1\le i \le 6)$ (see Figure \ref{Fig:Fn}$(a)$), yielding a unique non-isomorphic graph $H_6$.

If $n=7$, then $G'=H_6$ and $\sum_{i=1}^3d_{G'}(u_i)=13$. Since $d_{G'}(u_i)\le n-2=5$ for $i\in\{1,2,3\}$, the degrees of vertices $u_1,u_2,u_3$ in $G'$ are $3,5,5$ or $4,4,5$. So vertex $v$ can be placed into $f_1,f_2,f_5$ and $f_6$ (see Figure \ref{Fig:Fn}$(a)$), yielding two non-isomorphic graphs $H_7$ and $J_1$ (see Figure \ref{Fig:Fn}$(b)$). Thus, $\mathbb{F}(7,P_3)=\{H_7,J_1\}$. 

If $n=8$, then $G'\in\{H_7,J_1\}$ and $\sum_{i=1}^3d_{G'}(u_i)=15$. If $G'=H_7$, then placing $v$ into $f_1$ or $f_2$ yields $G=H_8$; otherwise $\sum_{i=1}^3d_{G'}(u_i)<15$. If $G'=J_1$, then placing $v$ into $f_6$ (see Figure \ref{Fig:Fn} $(b)$) yields $G=J_2$; otherwise the sum is again less than 15. So $\mathbb{F}(8,P_3)=\{H_8,J_2\}$. 

If $n=9$, then $G'\in\{H_8,J_2\}$ and $\sum_{i=1}^3d_{G'}(u_i)=17$. If $G'=H_8$, then placing $v$ into $f_1$ or $f_2$ yields $G=H_9$. If $G'=J_2$, then there is no face $f$ such that $\sum_{u\in\partial f}d(u)=17$. So $\mathbb{F}(9,P_3)=\{H_9\}$. 

Suppose $n=10$, then $G'=H_9$ and $\sum_{i=1}^3d_{G'}(u_i)=2n-1=3+2(n-2)$. Observe that the degree sequence of $G'$ is $3,3,4,\dots,4,n-2,n-2$. If $v$ is adjacent to at most one vertex with degree $n-2$, then $\sum_{i=1}^3d_{G'}(u_i)\le n-2+4\times2=n+6<2n-1$. So vertex $v$ is adjacent to two $(n-2)$-vertices and one 3-vertex, that is, $v$ is placed into face $f_1$ or $f_2$ (see Figure \ref{Fig:Fn}$(a)$), yielding a unique non-isomorphic graph $G=H_n$. Thus, $\mathbb{F}(n,P_3)=\{H_n\}$ for $n\ge10$. 
\end{proof}
\begin{lem}[\cite{Wood2007}]\label{lem:s_tri}
Let $G$ be an $n (n\ge5)$-vertex planar graph. Then $\mathcal{N}(K_4,G)=f(n,K_4)$ if and only if $G$ has a separating triangle $T$ and $\mathcal{N}(K_4,G_i)=f(n_i,K_4)$ for $i\in\{1,2\}$, where $G_1$ and $G_2$ are two induced subgraphs of $G$ such that $G_1\cup G_2=G$ and $G_1\cap G_2=T$. 
\end{lem}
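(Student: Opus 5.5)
The plan is to prove both directions through one decomposition identity. If $T$ is a separating triangle of $G$, and $G_1,G_2$ are the induced subgraphs with $G_1\cup G_2=G$ and $G_1\cap G_2=T$, then
$$\mathcal{N}(K_4,G)=\mathcal{N}(K_4,G_1)+\mathcal{N}(K_4,G_2).$$
Both directions of the lemma follow from this identity combined with Wood's bound $f(m,K_4)=m-3$ for $m\ge4$.

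\textbf{Step 1: the decomposition identity.} Let $T$ separate $G$, and let $A$ and $B$ be the vertex sets strictly inside and strictly outside $T$, both nonempty. Put $G_1=G[A\cup V(T)]$ and $G_2=G[B\cup V(T)]$, so $n_1+n_2=n+3$ with $n_i=|G_i|\ge4$.
\begin{itemize}
\item No edge joins $A$ to $B$. Hence any $K_4$-copy, being a clique, cannot meet both $A$ and $B$, so it lies in $G_1$ or in $G_2$.
\item No $K_4$-copy lies in both $G_1$ and $G_2$, since $|V(G_1)\cap V(G_2)|=3$.
\end{itemize}
This gives the identity. Since both $G_i$ are planar, it yields
$$\mathcal{N}(K_4,G)\le (n_1-3)+(n_2-3)=n-3,$$
with equality if and only if $\mathcal{N}(K_4,G_i)=f(n_i,K_4)$ for both $i$. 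This settles sufficiency immediately.

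\textbf{Step 2: necessity.} Suppose $\mathcal{N}(K_4,G)=n-3$. Since $n\ge5$, this is at least $2>0$, so $G$ contains a $K_4$-copy $K$. The plane embedding of $K$ splits the plane into four triangular regions. Because $n\ge5$, some vertex $z\notin V(K)$ lies strictly inside one of these regions, bounded by a triangle $T$ of $K$. The fourth vertex of $K$ lies strictly outside $T$, so $T$ is a separating triangle. Applying Step 1 to this $T$, the equality $\mathcal{N}(K_4,G)=n-3$ forces $\mathcal{N}(K_4,G_i)=f(n_i,K_4)$ for $i\in\{1,2\}$.

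The main point needing care is the interpretation of ``separating triangle'' when $G$ need not be a triangulation. I will use the topological definition: a triangle with vertices of $G$ strictly on both sides in the embedding. Under this definition, the absence of $A$--$B$ edges in Step 1 is immediate from planarity, and the case $n_i=4$ poses no problem because $f(4,K_4)=1$.
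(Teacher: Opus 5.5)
Your proof is correct. The paper gives no proof of this lemma; it only quotes it from Wood, so there is no argument of the paper's to compare routes with. Your two ingredients are the decomposition identity across a separating triangle, and the observation that a $K_4$ in a plane graph with at least five vertices has a face-triangle with a vertex of $G$ on each side. Together they give a complete derivation from $f(m,K_4)=m-3$.

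A few remarks that would tighten it.

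First, your Step 1 proves more than the existential statement. For \emph{every} separating triangle $T$, the equality $\mathcal{N}(K_4,G)=n-3$ forces both pieces to be extremal. This universal form is what the paper actually relies on in the proof of Theorem~\ref{thm:K4}. There $T$ is chosen to minimize $|G_2|$, and the components of $G-V(T)$ are regrouped so that $G_2-T$ is connected. The existential version of the lemma does not by itself justify extremality of the pieces for that particular choice; your argument does.

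For such a regrouped (or arbitrary) pair of induced subgraphs with $G_1\cup G_2=G$ and $G_1\cap G_2=T$, the absence of edges between $V(G_1)\setminus V(T)$ and $V(G_2)\setminus V(T)$ follows directly from $G_1\cup G_2=G$. You do not need the inside/outside picture, so phrase Step 1 that way. It then covers the sufficiency direction for whatever $G_1,G_2$ the statement intends.

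Second, in Step 2, if $z$ lies in the unbounded face of the embedded $K$, it is $z$ that lies outside $T$ and the fourth vertex that lies inside. The conclusion that $T$ separates is unchanged.

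Finally, your two steps also prove Wood's bound itself by induction, so $f(m,K_4)=m-3$ need not be a black box. If $G$ is $K_4$-free there is nothing to prove. Otherwise, split along the separating triangle from Step 2 into pieces of orders $4\le n_1,n_2\le n-1$ with $n_1+n_2=n+3$. Stacked triangulations give the matching lower bound, since each inserted degree-$3$ vertex creates exactly one new $K_4$.
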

\begin{thm}\label{thm:K4}
$\mathbb{F}(K_4)=\mathcal{A}\setminus\{K_3\}$. 
\end{thm}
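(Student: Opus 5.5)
We prove both inclusions, using induction on $n$ together with Lemma~\ref{lem:s_tri}. The inclusion $\mathcal{A}\setminus\{K_3\}\subseteq\mathbb{F}(K_4)$ is Wood's result quoted above. It can also be checked directly by induction: inserting a vertex into a face of an Apollonian graph creates exactly one new $K_4$, since the new vertex has degree $3$. Hence $\mathcal{N}(K_4,G)=|G|-3=f(n,K_4)$ for every $G\in\mathcal{A}_n$ with $n\ge4$. The work is therefore in the reverse inclusion: every $n$-vertex planar graph $G$ with $\mathcal{N}(K_4,G)=n-3$ is Apollonian.

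We induct on $n$. For $n=4$, the only $4$-vertex graph containing a $K_4$ is $K_4$ itself, and $K_4\in\mathcal{A}_4$. Now let $n\ge5$ and $\mathcal{N}(K_4,G)=f(n,K_4)$. By Lemma~\ref{lem:s_tri}, $G$ has a separating triangle $T$, and there are induced subgraphs $G_1,G_2$ with $G_1\cup G_2=G$, $G_1\cap G_2=T$ and $\mathcal{N}(K_4,G_i)=f(n_i,K_4)$. Since $T$ is separating, each $G_i$ has a vertex outside $T$, so $4\le n_i<n$. By the induction hypothesis, $G_1$ and $G_2$ are Apollonian graphs, and both contain the triangle $T$.

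The key step is to show that gluing two Apollonian graphs along a common triangle $T$, with $G_1$ on one side of $T$ and $G_2$ on the other, gives an Apollonian graph. For this, consider how $T$ sits in each $G_i$. In the planar embedding of $G$, $G_2-T$ lies inside one face of $G_1$, namely the side of $T$ not containing $G_1-T$. So $T$ must be a facial triangle of $G_1$, and likewise of $G_2$. Every facial triangle of an Apollonian graph can be taken as the outer face of its recursive construction. This holds because Apollonian graphs are exactly the triangulations obtained by repeatedly adding degree-$3$ vertices in faces, and the class is symmetric under choosing any face as the outer one; one can see this by noting that the construction is equivalent to building a $3$-tree, which is independent of the choice of outer face.

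So $G_2$ can be built from $T$ by stacking vertices into faces inside $T$. Starting from $G_1$, in which $T$ is a face, we perform the same sequence of insertions into the face $T$ and obtain $G$. Hence $G\in\mathcal{A}$, and clearly $G\ne K_3$.

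The main obstacle is justifying rigorously that each $G_i$ contains $T$ as a face and that $G_2$ can be rebuilt from $T$ as its starting triangle. First, I would confirm from the form of Lemma~\ref{lem:s_tri} that $G$ may be taken to be a triangulation: an extremal $G$ is maximal planar by the same edge-adding argument as in Lemma~\ref{lem:maximal}, because adding an edge never decreases the number of $K_4$ copies. I would then use the fact that a triangulation is Apollonian if and only if it can be reduced to $K_4$ by repeatedly deleting degree-$3$ vertices. Under this characterization the gluing step is immediate. Peel off degree-$3$ vertices of $G_2$ that lie outside $T$; such vertices exist because $G_2\ne T$ is Apollonian, and the reduction can be chosen to avoid $T$ since $T$ can serve as the base triangle. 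Each such vertex still has degree $3$ in $G$, so these deletions are valid reductions of $G$. When all of $G_2-T$ has been removed, what remains is $G_1$, which is Apollonian.
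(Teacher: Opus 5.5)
\textbf{Gap: why $T$ is a face of $G_1$ and $G_2$.} Your outline matches the paper's (induction plus Lemma~\ref{lem:s_tri}), but it has a genuine gap at the point you yourself flag as the main obstacle. You need that $T$ is a facial triangle of both $G_1$ and $G_2$, i.e.\ that $G_1-T$ and $G_2-T$ lie on opposite sides of $T$. Lemma~\ref{lem:s_tri} only gives $G=G_1\cup G_2$ and $G_1\cap G_2=T$. It says nothing about $G_1-T$ lying on one side of $T$; a priori, components of $G_1-T$ could sit on both sides. Your sentence ``$G_2-T$ lies inside one face of $G_1$, namely the side of $T$ not containing $G_1-T$'' assumes exactly this, so deducing that $T$ is facial is circular.

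This is the part the paper's proof actually works for. It shows $G_1-T$ is connected in two cases:
\begin{itemize}
\item if two components of $G_1-T$ are both attached to all of $T$, they give a $K_{3,3}$-minor together with $G_2-T$;
\item if some component is attached to at most two vertices of $T$, splitting $G$ there gives $n-3\le n-4$.
\end{itemize}

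\textbf{Your proposed remedy does not work.} The argument of Lemma~\ref{lem:maximal} needs every added edge to lie in a copy of $H$, so that the count \emph{strictly} increases. For $K_4$ this fails. A new edge $uv$ with incident faces $uvx,uvy$ lies in a $K_4$ only if $xy\in E$; for example, the octahedron contains no $K_4$ at all. ``Adding an edge never decreases the count'' only shows that some triangulation containing $G$ is extremal. That is useless when you must classify \emph{every} extremal $G$.

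\textbf{The repair.} It is short and comes from the induction hypothesis:
$e(G)=e(G_1)+e(G_2)-3=(3n_1-6)+(3n_2-6)-3=3n-6$, so $G$ is a triangulation. Then every vertex you peel from $G_2-T$ has degree $3$ in a triangulation, and your reduction to $G_1$ goes through. Alternatively, $3$-connectivity of the Apollonian graphs $G_i$ plus the $K_{3,3}$ argument shows that $T$ is non-separating, hence facial, in each $G_i$.

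Once this is fixed, rebuilding $G_2$ from $T$ as base triangle is a clean substitute for the paper's choice of $T$ minimizing $|G_2|$. The re-rooting fact deserves one line of proof. A non-complete Apollonian graph has two non-adjacent degree-$3$ vertices, at most one of which lies in $T$. Deleting a degree-$3$ vertex leaves an Apollonian graph.
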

\begin{proof}
Let $G$ be an $n$-vertex planar graph with $\mathcal{N}(K_4,G)=f(n,K_4)=n-3$. We are going to show that $G\in\mathcal{A}_n$. 
If $n=4$, then $G=K_4\in\mathcal{A}_4$. Assume $n\ge5$ and $T,G_1,G_2$ be defined as in Lemma \ref{lem:s_tri}. We can assume $G_2-T$ is connected and take $T$ such that $|G_2|$ is as small as possible. Then $\mathcal{N}(K_4,G_i)=f(|G_i|,K_4)$ for $i\in\{1,2\}$. By the definition of the Apollonian graphs and the choice of $G_2$, we have $|G_2-T|=1$. By induction, $G_i\in\mathcal{A}_{|G_i|}$. We claim that $G_1-T$ is connected. Suppose for a contradiction that $C_1$ and $C_2$ are two components of $G_1-T$. Then $N(C_1),N(C_2)\subseteq V(T)$. If $N(C_1)=N(C_2)=V(T)$, then $G$ contains a $K_{3,3}$ minor by contracting $C_1,C_2$ and $G_2-T$ into single vertices, respectively. We may assume $|N(C_1)|\le 2$. Let $G_1' = G[N[C_1]]$ and $G_2' = G-C_1$. Then $G_1'\cup G_2'=G$ and $|G_1'\cap G_2'|\le 2$. It follows that 
\begin{align*}
 n-3&=\mathcal{N}(K_4,G)= \mathcal{N}(K_4,G_1')+\mathcal{N}(K_4,G_2')\le f(|G_1'|,K_4)+f(|G_2'|,K_4)\\
 &= (|G_1'|-3)+(|G_2'|-3)=n+|G_1'\cap G_2'|-6\le n-4,
\end{align*}
a contradiction. This proves our claim, implying that $T$ can not be a separating triangle in $G_1$. So $G_1$ admits a planar embedding with all vertices of $G_1-T$ lie in the exterior of $T$. Since $|G_2-T|=1$, the unique vertex in $G_2-T$ can be embedded into the interior of $T$. Therefore, $G\in\mathcal{A}_n$. 
\end{proof}

\end{document}